\newtheorem{prop}{Proposition}[section]
\newtheorem{thm}[prop]{Theorem}
\theoremstyle{definition}
\newtheorem{defi}[prop]{Definition}
\newtheorem{exmp}[prop]{Example}
\theoremstyle{remark}
\newtheorem{remarks}[prop]{Remarks}
\newcommand{\benu}{\begin{enumerate}}
\newcommand{\enu}{\end{enumerate}}
\newcommand{\beqna}{\begin{eqnarray}}
\newcommand{\eqna}{\end{eqnarray}}
\newcommand{\beqnast}{\begin{eqnarray*}}
\newcommand{\eqnast}{\end{eqnarray*}}
\newcommand{\beqn}{\begin{equation}}
\newcommand{\eqn}{\end{equation}}
\newcommand{\beqnst}{\begin{equation*}}
\newcommand{\eqnst}{\end{equation*}}
\newcommand{\bema}{\left ( \begin{array}}
\newcommand{\ema}{\end{array} \right )}
\newcommand{\End}{\operatorname{End}} 
\newcommand{\anti}{S^{-1}}
\begin{document}

\title[Partial actions]{Partial actions: what they are and why we care}

\author[E. Batista]{Eliezer Batista}
\address{Departamento de Matem\'atica, Universidade Federal de Santa Catarina, Brazil}
\email{eliezer1968@gmail.com}

\subjclass[2010]{16T05, 16S40, 16S35, 58E40}

\keywords{partial Hopf action, partial action, partial coaction, partial smash product, partial representation}

\begin{abstract}
We present a survey of recent developments in the theory of partial actions of groups and Hopf algebras. 
\end{abstract} 

\maketitle

\setcounter{tocdepth}{2}

\flushbottom

\section{Introduction}
The history of mathematics is composed of many moments in which an apparent dead end just opens new doors and triggers further progress. The origins of partial actions of groups can be considered as one such case. The notion of a partial group action appeared for the first time in Exel's paper \cite{ruy}. The problem was to calculate the $K$-theory of some $C^*$-algebras which have an action by automorphisms of the circle $\mathbb{S}^1$. By a known result due to Paschke, under suitable conditions about a $C^*$-algebra carrying an action of the circle group, it can be proved that this $C^*$-algebra is isomorphic to the crossed product of its fixed point subalgebra by an action of the integers. The main hypothesis of Paschke's theorem is that the action has large spectral subspaces. A typical example in which this hypothesis does not occur is the action of $\mathbb{S}^1$ on the Toeplitz algebra by conjugation by the diagonal unitaries $\mbox{diag}(1,z,z^2, z^3, \ldots )$, for $z\in \mathbb{S}^1$. Therefore a new approach was needed to explore the internal structure of those algebras that carry circle actions but don't have large enough spectral subspaces. Using some techniques coming from dynamical systems, these algebras could be characterized as crossed products by partial automorphisms. By a partial automorphism of an algebra $A$ one means an isomorphism between two ideals in the algebra $A$. A partial action of a group $G$ on an algebra $A$ is a family of partial automorphisms of an algebra $A$ indexed by the elements of the group whose composition, when it is defined, must be compatible with the operation of $G$. The point of view of crossed products by partial actions of groups was enormously successful for classifying  $C^*$-algebras, the most relevant examples are the Bunce-Deddens algebras \cite{E-2}, approximately finite dimensional algebras \cite{E-3} and the Cuntz-Krieger algebras \cite{EL}. More recently, a characterization as a partial crossed product was given to the Bost-Connes and Cuntz-Li 
$C^*$-algebras associated to integral domains \cite{BE}. 

Very early, in the beginnings of the theory of partial actions of groups in the context of operator algebras, many unexpected connetions were explored. For example, partial group actions are closely related to actions of inverse semigroups \cite{ruy2}. In fact, for each group $G$, there is a universal inverse semigroup $S(G)$, nowadays known as Exel's semigroup, which associates to each partial action of $G$ on a set (topological space) $X$, a morphism of semigroups between $S(G)$ and the inverse semigroup of partially defined bijections (homeomorphisms) in $X$. At the level of $C^*$-algebras, this can be encoded by the notion of a partial representation of the group $G$, and the inverse semigroup $S(G)$ can be replaced by a $C^*$-algebra, $C^*_{par} (G)$ which has the universal property of factorizing partial representations by $\ast$-homomorphisms. The structure and the universal property of the partial crossed product was also explored \cite{quigg}. The study of partial dynamical systems, that is, dynamical systems originating from the action of a partially defined homeomorphism on a topological space, benefited strongly from the theory of partial group actions and partial crossed products; the most influential article in this direction is due to Exel, Laca and Quigg \cite{ELQ}. The historical developments and more recent advances of the theory of partial actions in operator algebras and dynamical systems are explained in Exel's book \cite{E-4}.

Partial group actions began to draw the attention of algebraists after the release of the seminal paper \cite{dok}, by R. Exel and M. Dokuchaev. There, the authors defined partial group actions on algebras and partial skew group algebras. The first difference between the pure algebraic and the operator algebra context appeared in the definition of the partial skew group algebra, which is not automatically associative in the general situation. A sufficient condition to assure the associativity is to require that the algebra $A$, on which the group acts partially, is semiprime, that is, all its non zero ideals are either idempotent or nondegenerate. This condition is satisfied by all $C^*$-algebras, because one can find an approximate unit in each of its ideals. The globalization problem, which is the question whether given a partial group action is a restriction of a global one, first addressed in the operator algebraic context by Abadie \cite{Abadie, AbadieTwo},  was also translated into purely algebraic terms in \cite{dok}. A partial action of a group $G$ on a unital algebra $A$ is globalizable if and only if, for each $g\in G$ the ideal $A_g$, which is the domain of the partially defined isomorphism $\alpha_{g^{-1}}$, is a unital ideal, that is, it is generated by a central idempotent $1_g \in A$. Finally, in the same article, the notion of a partial representation of a group $G$ and the construction of the partial group algebra $k_{par}G$, which factorizes partial representations by morphisms of algebras, was made in purely algebraic terms. The partial group algebra can be characterized as the partial skew group algebra defined by a partial action of the group $G$ on a specific abelian subalgebra of $k_{par}G$.

The algebraic theory of partial actions and partial representations of groups underwent several advances. The interconnection between partial actions and the theory of inverse semigroups were explored, for example in references \cite{BCFP}, \cite{EV} and \cite{St1}. In particular, Exel's universal semigroup $S(G)$ was shown in \cite{KL} to be isomorphic to the Birget-Rhodes expansion $\tilde{G}^R$ of the group $G$ \cite{BR1,BR2,Sz}. The role of groupoids in partial actions was first pointed out by F. Abadie in \cite{Aba2}. In fact, there is a categorical equivalence between the partial actions of a group $G$ on sets and the star injective functors from groupoids to the group $G$, considered as a groupoid with a single object \cite{KL}. But as groupoids themselves are more general structures then groups, partial actions of groupoids became a subject by itself \cite{BEM}. We can also cite the role of groupoids in the globalization of partial actions of groups \cite{EGG}. The algebraic theory of partial representations of finite groups was particularly well developed because of the isomorphism between  the partial group algebra $k_{par}G$ of a finite group $G$, and the groupoid algebra $k\Gamma (G)$ of a particular groupoid $\Gamma (G)$, whose objects are subsets of the group \cite{dok0}. The very geometric structure of the groupoid $\Gamma (G)$ in terms of its connected  components allows one to decompose the partial group algebra into a product of matrix algebras, in particular, for the case when ${\rm char} (k) \nmid |G|$ it can be proved that the partial group algebra is semi-simple. The groupoid structure also is a tool for classifying irreducible partial representations of an arbitrary group \cite{dok1}.

To point out other noteworthy advances from the purely algebraic point of view, there are several ring theoretic considerations. For example, partial actions of groups on semiprime rings \cite{F} and the study of the structure of partial skew group rings with regard its semisimplicity and semiprimeness \cite{FL}. Another highlight is the development of the theory of twisted partial group actions. In this context, it was possible to characterize a $G$-graded algebra as a crossed product by a twisted partial action of $G$ \cite{DES1}. Also the globalization of twisted partial group actions was developed in reference \cite{DES2}. Regarding globalization, one of the most important recent results can be found in reference \cite{ADES}, in which the notion of Morita equivalence between partial actions was given and it was proved that every partial group action over a not necessarily unital algebra is Morita equivalent to a globalizable (unital) partial action.

The attempt to extend the Galois theory for commutative algebras, due to Chase, Harrison and Rosenberg \cite{CHR}, to the case of partial group actions (see reference \cite{DFP}) spawned a new and unexpected development, namely the extension of the notion of partial actions to the realm of Hopf algebras. There are two seminal articles that triggered the study of partial actions of Hopf algebras. The first, authored by Caenepeel and De Groot \cite{CdG}, extended the Galois theory for partial group actions on noncommutative rings, using the technique of Galois corings \cite{Brz,Caen, Wis}. The second, by Caenepeel and Janssen \cite{CJ}, defined partial entwining structures, as well their duals, partial smash products. From a specific partial entwining structure involving a Hopf algebra $H$ and an algebra $A$, one can define what is a partial coaction of $H$ on $A$ and from a smash product structure involving the same ingredients, we obtain the notion of a partial action of $H$ on $A$. In this paper, the authors discuss the relation between weak and partial entwining structures, they explore dualities and define a partial Hopf-Galois theory. The theory of partial entwining structures can be placed in a more abstract setting, namely, the weak theory of monads, due to B\"ohm \cite{B2}.

These above mentioned articles were the starting point of the theory of partial actions of Hopf algebras. The first result was an extension of the Cohen-Montgomery duality theorem \cite{CM} for partial group actions, but then using Hopf algebra techniques \cite{L}. Afterwards, several developments appeared in the literature, as examples, we can point out the globalization theorem for partial actions of Hopf algebras \cite{AB}, the construction of the subalgebra of partial invariants and a Morita context between the partial smash product and the invariant subalgebra \cite{AB2}. The globalization theorem proved to be an important tool for solving problems involving partial actions. For example, in  \cite{AB3} it was possible to generalize the result of C. Lomp \cite{L} using globalization, proving an analogue of the Blattner-Montgomery duality theorem \cite{BM}
in the case of partial Hopf actions. Going ahead, twisted partial actions of Hopf algebras were defined as well \cite{ABDP1} and their globalization was recently achieved \cite{ABDP2}. Partial (co)actions of Hopf algebras on $k$-linear categories were also studied in \cite{AAB}. New classes of nontrivial examples have been discovered, that are typically Hopf algebraic and not arising from partial group actions, which makes the theory of partial actions of Hopf algebras more independent from the original theory of partial group actions. A deeper connection between partial actions of Hopf algebras and Hopf algebroids \cite{B} was established in reference \cite{ABV}, where the notions of a partial representation of a Hopf algebra and of a partial module over a Hopf algebra were introduced. This new approach, based on Hopf algebroids allowed to introduce a more categorical point of view for partial actions. This was an important step in order to make partial representations an interesting tool for representation theory. The category of partial modules has a monoidal structure which was not observed in the previous works for the group case. Then, even for the study of partial representations of groups, the techniques of monoidal categories coming from the theory of partial representations of Hopf algebras can be very useful. Finally, Hopf algebras are have a rich duality theory. In the context of partial actions of Hopf algebras, several dualities were constructed in \cite{BV}, expanding the universe of interesting objects related to partial actions and coactions.

The aim and scope of this article is to review the basic definitions and constructions related to partial actions of groups and Hopf algebras for a broad mathematical audience. We highlight important examples and results scattered throughout the literature in order to draw the reader's attention to what has been done hitherto in the theory and also to point out some directions for future research. We mainly emphasize the recent developments of the theory of partial actions in the realm of Hopf algebra theory. The basic reason for our choice is the existence of an extensive literature summarizing the advances and the achievements for the case of partial actions and partial representations of groups. Although, it is worthy to be pointed out that our developments in partial (co)actions of Hopf algebras are deeply rooted on what has been done for groups by the above mentioned mathematicians and many others, who were not mentioned but whose importance is widely acknowledge by the mathematical community. All omissions in this paper may be reckoned as an author's fault. For the interested reader, we recommend, for example \cite{D} and \cite{E-4}, and references therein, for a thorough description of techniques and results of partial group actions. 

This paper is organized as follows: In section 2 we review the basic issues about partial group actions. We point out the relationship between partial group actions and groupoids and present the notion of a partial skew group algebra, which is one of the most important constructions in the whole theory. In the same section we show that, for the case of a partial action of a finite group on a finite set, the partial skew group algebra is isomorphic to the algebra of the groupoid associated to that partial action. As the algebra of a finite groupoid is a weak Hopf algebra, this isomorphism endows the partial skew group algebra with a weak Hopf algebra structure. This result, even though it is very simple, did not appear earlier in the literature and it can be considered the only result in this article which was not published elsewhere. In section 3, we define partial actions and partial coactions of Hopf algebras, we present some nontrivial examples of partial actions, as well as the partial smash product, the most important and useful construction in the theory of partial actions of Hopf algebras. We are emphasizing most the partial actions but  we will point out very briefly some dual constructions recently defined. In section 4, we address the several globalization theorems that exist in the context of partial group actions and partial actions of Hopf algebras.  The partial representations of groups and Hopf algebras are treated in section 5. First we review very briefly the relationship between partial actions of groups and inverse semigroups, which provides a broader environment to treat partial group actions. For the linearized case, i.e.\ partial representations of groups on algebras, we describe the universal partial group algebra which factorizes every partial representation by a morphism of algebras. The rich structure of the partial group algebra is described. For partial representations of Hopf algebras, there is also a universal algebra factorizing partial representations by algebra morphisms and this algebra has the structure of a Hopf algebroid. We point out the role played by Hopf algebroids in the whole theory and describe the monoidal structure of the category of partial modules. Finally, in section 6 we mention some open questions and give an outlook to some directions for future research, arguing that partial actions and partial representations have the potential to be a more fundamental tool in representation theory.

\section{Partial actions of groups}

We start by defining the basic notion of a partial action of a group on a set. Partial actions of groups are more common in mathematics than one would expect. Whenever we deal with ordinary differential equations whose flow is incomplete, or when we consider fractional linear transformations, we are dealing with examples of partial actions of groups. This section on partial group actions does not intend to be exhaustive, for more details about partial group actions, see the excellent survey by M. Dokuchaev \cite{D}.

\begin{defi} 
A partial action $\alpha = (\{ X_g \}_{g \in G}, \{\alpha_g \}_{g \in G})$ of a group $G$ on a set $X$ consists of a family indexed by $G$ of subsets $X_g \subseteq X$ and a family of bijections $\alpha_g : X_{g^{-1}} \rightarrow X_g$  for each $g\in G$, satisfying the following conditions:
\begin{enumerate}
\item[(i)] $X_e = X$ and $\alpha_e = \mbox{Id}_X$;
\item[(ii)] $(\alpha_h)^{-1}(X_{g^{-1}} \cap X_h) \subseteq X_{(gh)^{-1}}$, for each $g,h \in G$;
\item[(iii)] $\alpha_g (\alpha_h(x)) = \alpha_{gh}(x)$, for each $x \in (\alpha_h)^{-1}(X_{g^{-1}} \cap X_h)$.
\end{enumerate}

A morphism $\phi$ between two partial actions, $\alpha = (\{ X_g \}_{g \in G}, \{\alpha_g \}_{g \in G})$, and $\beta = (\{ Y_g \}_{g \in G}, \{\beta_g \}_{g \in G})$ of the same group $G$ is a function $\phi :X\rightarrow Y$ such that
\begin{enumerate}
\item[(a)] $\phi (X_g )\subseteq Y_g$, for each $g\in G$.
\item[(b)] For every $x\in X_{g^{-1}}$, $\beta_g (\phi (x))=\phi (\alpha_g (x))$.
\end{enumerate}

We denote the category of partial actions of a group $G$ on sets with morphisms of partial actions by $\underline{Psets}_G$ 
\end{defi}

A few remarks are pertinent about the previous definition: Firstly, the axiom (i) reflects the fact that the neutral element of the group acts as the identity, while the axioms (ii) and (iii) combined say that the composition of bijections indexed by elements of the group, wherever it is defined, is compatible with the operation of the group. Secondly, the axioms (i) and (iii) combined lead to the conclusion that $(\alpha_g )^{-1} =\alpha_{g^{-1}}$. Thirdly, from axioms (ii) and (iii), one can also prove that
\[
\alpha_g (X_{g^{-1}}\cap X_h )=X_g \cap X_{gh},
\] 
for all $g,h\in G$.
It is easy to see that any global action $\alpha :G\rightarrow \mbox{Bij}(X)$, where $\mbox{Bij}(X)$ denotes the group of bijections defined on the set $X$, is a partial action of $G$ on $X$. Also it is quite straightforward that a partial action is global if, and only if, for every $g\in G$ we have $X_g =X$.

The notion of a partial action can be specified to several different contexts. For example, to define a partial action of a (discrete) group $G$ on a topological space $X$, we put the domains $X_g$ to be open subsets of $X$ and the partially defined bijections $\alpha_g$ to be homeomorphisms. For the differentiable case, replace local homeomorphisms for local diffeomorphisms. The case where the group itself is a topological group or a Lie group is a bit more complicated and involves some additional conditions in order to establish that the domains are glued in a continuous/smooth manner. In the case of an action of a group $G$ on an algebra $A$, we impose the domains $A_g$ to be ideals of $A$ and the bijections $\alpha_g :A_{g^{-1}}\rightarrow A_g$ to be algebra isomorphisms. Along this text, we are restricting the discussion to the case of {\em unital partial actions}, that is, partial actions of groups on unital algebras in which the ideals $A_g$ are unital algebras , that is, they are written as $A_g =1_g A$ for $1_g \in A$ being a central idempotent in $A$. Explicitly, as the isomorphisms $\alpha_g$ are unital, we have $\alpha_g (1_{g^{-1}})=1_g$, for each $g\in G$.

\begin{exmp} Consider $G$ to be the additive group of integers, $(\mathbb{Z}, +)$, and define $X$ as the subset of non-negative integers $\mathbb{Z}_+$. Then $G$ acts partially on $X$ by translations. Explicitly, for any $n\geq 0$, we have the domains 
$X_{-n} =\mathbb{Z}_+ $ and $X_{n} =\{ m\in \mathbb{Z}_+ \; | \; m\geq n \}$.
The associated bijections are
$\alpha_n :\ X_{-n} \to X_n$, $\alpha_n(m)=m+n$.
\end{exmp}

\begin{exmp} \label{flux} \cite{AbadieTwo} 
Consider a vectorfield $\mathbf{v}: X\rightarrow TX$ defined on a noncompact manifold $X$. The flow associated to $\mathbf{v}$ defines a partial action of the additive group $(\mathbb{R} ,+)$ on $X$. Indeed, for each $x\in X$ let $\gamma_x :\, ]a_x, b_x [\, \rightarrow X$ be the integral curve of the vector field $\mathbf{v}$ on $x$. By an integral curve, we mean that, 
\begin{enumerate}
\item[(i)] $0\in ]a_x, b_x [$; 
\item[(ii)] $\gamma_x (0)=x$;
\item[(iii)] $\dot{\gamma}_x (t)= \mathbf{v}(\gamma_x (t))$, for $t\in ]a_x , b_x[$;
\item[(iv)] $\gamma_x$ is defined in its maximal interval $]a_x , b_x [$. 
\end{enumerate}
For each $t\in \mathbb{R}$, we have the domain
$X_{-t} =\{ x\in X \; | \; t\in ]a_x , b_x [ \}$
and the partially defined diffeomorphisms
$ \alpha_t :  X_{-t} \to X_t $, $\alpha_t (x)= \gamma_x (t)$.

\end{exmp}

\begin{exmp} \label{mobius} \cite{KL} Consider $G=PSL(2, \mathbb{C})$ the group of complex projective transformations on the complex projective line $\mathbb{CP}^1 \cong \mathbb{S}^2$, and let $X=\mathbb{C}$ be the complex plane. The partial action is given by the fractional linear transformations on the complex plane, the so-called M\"obius transformations. That is, for 
\[
g=\left[ \left( \begin{array}{cc} a & b \\ c & d \end{array} \right) \right] \in PSL(2, \mathbb{C}) ,
\]
we have the domains
$$X_{g^{-1}} =
\begin{cases}
\mathbb{C}\setminus \{-d/c\}&{\rm if~}c\neq 0\\
\mathbb{C}&{\rm if~}c= 0
\end{cases}
~~~~~~{\rm and}~~~~~~
X_g=
\begin{cases}
\mathbb{C}\setminus \{a/c\}&{\rm if~}c\neq 0\\
\mathbb{C}&{\rm if~}c= 0
\end{cases}
$$
 and 
$$ \alpha_g :\ X_{g^{-1}} \to X_g,~~~~~\alpha_g(z) = \frac{az+b}{cz+d}. $$
\end{exmp} 

\begin{exmp} \cite{BV} This example illustrates that partial actions of groups on geometric figures can be sensitive for the internal structure of these figures. Consider the action of the group $G=\mathbb{S}^1 \rtimes \mathbb{Z}_2$, which is the semi-direct product of the rotation group $\mathbb{S}^1$ by the group $\mathbb{Z}_2$, acting on $Y=S_0 \cup S_1 \cup S_{-1}$, which consists of three horizontal unit circles in $\mathbb{R}^3$, defined as
\begin{eqnarray*}
S_0 & = & \{ (x,y,z)\in \mathbb{R}^3 \; | \; x^2 +y^2 =1 , \; z=0 \} \\
S_1 & = & \{ (x,y,z)\in \mathbb{R}^3 \; | \; x^2 +y^2 =1 , \; z=1 \} \\
S_{-1} & = & \{ (x,y,z)\in \mathbb{R}^3 \; | \; x^2 +y^2 =1 , \; z=-1 \} 
\end{eqnarray*}
The initial (global) action of $G$ on $Y$ is given by rotations around the $z$ axis and flips around the $x$ axis:
\[
\beta_{(\theta , j)} (x,y,z) =( x\cos \theta -jy\sin \theta ,x\sin \theta +jy \cos \theta ,jz), \qquad j=\pm 1 .
\]

If we restrict ourselves to the subset $X=S_0 \cup S_1$, then we end up with a partial action of $G$ on $X$, for $j=1$ we have $X_{(\theta ,1)}=S_0 \cup S_1$ but for $j=-1$ we have 
$X_{(\theta , -1 )}=S_0$ and $\alpha_{(\theta ,j)}=\beta_{(\theta ,j)}|_{X_{(-\theta , j)}}$. Geometrically, this means that we can still rotate the two circles but we can flip only the circle $S_0$. Therefore, while a global action confers rigidity to the geometric figure, a partial action decouples its parts, allowing to probe its internal structure.
\end{exmp}

The previous example also explicits how to create, in general, partial group actions out of global actions. In fact, given a (global) group action $\beta :G\rightarrow \mbox{Bij} (Y)$, we can restrict $\beta$ to a partial action on the subset $X\subseteq Y$. Indeed, for each $g\in G$ we put
$X_g =X\cap \beta_g (X)$,
and $\alpha_g :X_{g^{-1}}\rightarrow X_g,\ \alpha_g =\beta_g|_{X_{g^{-1}}}$. Then $\alpha=(\{X_g\}_{g\in G},\{\alpha_g\}_{g\in G})$ defines a partial action  of $G$ on $X$ \cite{dok}. We shall see later in the section about globalization theorems that every partial action of a group on a set is isomorphic to a restriction of a global action of the same group on a larger set.

The restriction of domains which characterizes partial actions of groups on sets entails situations in which the composition of two (partially defined) bijections is not properly defined. This suggests that partial group actions can be related to another important mathematical structure whose operations are not globally defined, namely groupoids. A groupoid is a small category, i.e.\ an internal category in $\underline{Set}$, where all morphisms are isomorphisms. More precisely, a Groupoid is a septuple $(\mathcal{G}, \mathcal{G}^{(0)}, s,t,m,e,(\; )^{-1})$ in which $\mathcal{G}$ and $\mathcal{G}^{(0)}$ are sets, called respectively by the set of arrows and the set of objects, the maps $s$ and $t$ between $\mathcal{G}$ and $\mathcal{G}^{(0)}$, are called respectively source and target maps (intuitively, an element $\gamma \in \mathcal{G}$ can be viewed as a morphism in the category $\mathcal{G}$ between $s( \gamma)$, its domain, and $t(\gamma)$, its range). The map $m: \mathcal{G}^{(2)}\rightarrow \mathcal{G}$ is called multiplication, its domain is the set 
\[
\mathcal{G}^{(2)} =\{ (\gamma , \delta ) \in \mathcal{G} \times \mathcal{G} \; | \; s(\gamma )=t(\delta ) \}
\]
and it is denoted by $m(\gamma , \delta )=\gamma \delta$ (intuitively, it can be viewed as composition of morphisms between $s(\delta )$ and $t(\gamma )$). The multiplication, where it is defined, is associative. The map $e:\mathcal{G}^{(0)} \rightarrow \mathcal{G}$ is called the unit map and associates to each object $x\in \mathcal{G}^{(0)}$ the identity map $e_x \in \mathcal{G}$ whose source and target coincides with $x$. Finally, the map $(\; )^{-1} :\mathcal{G} \rightarrow \mathcal{G}$ is the inversion, which associates to each arrow $\gamma \in \mathcal{G}$ its inverse $\gamma^{-1}$ such that $s(\gamma^{-1})=t(\gamma )$, $t(\gamma^{-1})=s(\gamma )$, $\gamma \gamma^{-1} =e(t(\gamma ))$ and $\gamma^{-1} \gamma =e(s(\gamma ))$. Often, one identifies a groupoid with its set of arrows $\mathcal G$.

To a given partial action $\alpha =( \{ X_g \}_{g\in G} , \{ \alpha_g \}_{g\in G} )$ of a group $G$ on a set $X$, one can associate a groupoid \cite{Aba2}
\[
\mathcal{G} (G,X,\alpha )= \{ (g,x) \in G\times X \; | \; x\in X_{g^{-1}} \} .
\]
$\mathcal{G}^{(0)}=X$ is the set of objects. The source and target maps are defined by the formulas
$s(g,x) =x$ and $t(g,x)=\alpha_g (x)$.
The multiplication is given by
\[
(g,x)(h,y)=
\begin{cases}
(gh, y)&{\rm if}~x=\alpha_h (y)\\
\underline{\quad } &{\rm otherwise}
\end{cases},\]
and the units and inverses are given by the formulas
$e (x)= (e,x)$ and $(g,x)^{-1} =(g^{-1} , \alpha_g (x) )$,
where $e$ is the unit element of $G$..
Note that the projection $\pi_1 :\mathcal{G}(G,X,\alpha )\rightarrow G$, given by $\pi_1 (g,x)=g$ is a functor from the category $\mathcal{G}(G,X,\alpha )$ to the category $G$ viewed as a small category with one object and whose arrows are the elements of the group. This functor happens to be star injective. More precisely, given a small category $\mathcal{C}$, the star over an object $x\in \mathcal{C}^{(0)}$ is the set 
\[
\mbox{Star}(x)=\{ f:x \rightarrow y \; | \; y\in \mathcal{C} \} .
\]
A functor $F:\mathcal{C}\rightarrow \mathcal{D}$, between two small categories $\mathcal{C}$ and $\mathcal{D}$ is called star injective (surjective), if  the map $F|_{\mbox{\small{Star}}(x)} :\mbox{Star}(x) \rightarrow \mbox{Star}(F(x))$ is injective (surjective), for every object $x\in \mathcal{C}^{(0)}$. The star injectivity of the projection map 
$\pi_1 :\mathcal{G}(G,X,\alpha )\rightarrow G$ can be rephrased as $\pi_1$ is star injective if $\pi_1 (\gamma )=\pi_1 (\delta )$ and $s(\gamma )=s (\delta )$, then $\gamma =\delta$ and its star surjectivity, which is equivalent to the action $\alpha$ to be global, can be rephrased as $\pi_1$ is star surjective if for each $g\in G$ and $x\in X$ there is an arrow $\gamma$ such that $\pi_1 (\gamma )=g$ and $s(\gamma )=x$ \cite{KL}.

Conversely, given a groupoid $\mathcal{G}$ and a group $G$ with a star injective functor $F: \mathcal{G} \rightarrow G$ (viewing $G$ as a small category with one object), we can associate a partial action of $G$ on the set of objects $X=\mathbb{G}^{(0)}$. Indeed, for each $g\in G$, we define
\[
X_g =\{ x\in \mathcal{G}^{(0)} \; | \; \exists \gamma \in \mathcal{G}, \; \; s(\gamma )=x , \; F(\gamma )=g^{-1} \} .
\]
and we define $\alpha_g :X_{g^{-1}}\rightarrow X_g$ as
\[
\alpha (x)=t(\gamma ), \quad \mbox{ such that } \quad s(\gamma )=x, \; F(\gamma )=g .
\]
This map is well defined because the functor $F$ is star injective. This defines a partial action $\alpha=(\{X_g\}_{g\in G},\{\alpha_g\}_{g\in G})$ of $G$ on $X$, and this action is global if and only if the functor $F$ is star surjective \cite{KL}. 

These two results above establish a categorical equivalence between the category $\underline{Psets}_G$ of partial actions of the group $G$ and $\underline{\ast Inj}_G$ whose objects are pairs 
$(\mathcal{G}, F:\mathcal{G}\rightarrow G)$ consisting of a groupoid and a star injective functor from this groupoid to the group $G$, and whose  morphisms are functors between groupoids which entwine their respective star injective functors to $G$ \cite{KL}.

The power of the theory of partial actions can be felt better when one considers partial actions on algebras. Given any partial action $\alpha$ of a group $G$ on a set $X$, it is possible to define a partial action of the same group on the algebra of functions of $X$ with values in a field $k$, denoted here by $A=\mbox{Fun}(X,k)$, the domains $A_g$ consist of the functions which vanish outside $X_g$ and the action $\theta_g$ is given by
\begin{equation}
\label{actiononfunctions}
( \theta_g (f)) (x)=f(\alpha_{g^{-1}}(x)) , \qquad \mbox{ for }\; f\in A_{g^{-1}}, \; \mbox{ and } \; x\in X_g.
\end{equation}
The domains $A_g$ above defined are ideals of $A$ and are generated by the idempotents $1_g =\chi_{{}_{X_g}}$ which are the characteristic functions of the domains, therefore, the partial action is unital. For the case of $X$ being a topological space, we require that the partial action is made on the algebra of continuous functions with values in a topological field $k$ (usually $k=\mathbb{R}$ or $k=\mathbb{C}$), and the domains $A_g$ are given by the ideals of continuous functions vanishing outside $X_g$, the difference is that their characteristic functions, in general, are not continuous functions (in fact they are only in very specific topologies, in which the domains are clopen subsets). This produces what is called non unital partial actions, they are very important and far more abundant then the unital ones, but for the purposes of this survey, we will not talk much about them. One reason is that this requires techniques coming from multiplier algebras and its transposition for the Hopf context is still under development. 

One of the standard constructions that arise in the theory of partial group actions on algebras is the partial crossed product, or partial skew group algebra \cite{quigg,dok}. Given a (not necessarily unital) partial action 
$\alpha =(\{ A_g \}_{g \in G}, \{\alpha_g \}_{g \in G})$ of a group $G$ on a (not necessarily unital) algebra $A$, we define a multiplication on the $k$-vector space
\[
A\rtimes_{\alpha} G =\bigoplus_{g\in G} A_g \delta_g
\]
by the following expression
\begin{equation}
\label{produtocruzado}
(a_g \delta_g )(b_h \delta_h )=\alpha_g (\alpha_{g^{-1}} (a_g )b_h )\delta_{gh} .
\end{equation}
Remark that $a_g \in A_g$, which implies that $\alpha_{g^{-1}} (a_g )\in A_{g^{-1}}$. As $A_{g^{-1}}$ and $A_h$ are ideals, then the product $\alpha_{g^{-1}} (a_g )b_h$ lies in the intersection $A_{g^{-1}} \cap A_h$ and by the axiom (ii) we know that $\alpha_g (\alpha_{g^{-1}} (a_g )b_h ) \in A_{gh}$. Therefore, the expression (\ref{produtocruzado}) is well defined. The product, a priori is not associative, the most general case in which we can ensure the associativity is when the algebra $A$ is semiprime, that is, it has no nilpotent ideals \cite{dok}. For the case of unital actions the expression (\ref{produtocruzado}) can be written in a simpler way, namely
\[
(a_g \delta_g )(b_h \delta_h ) =a_g \alpha_g (b_h 1_{g^{-1}}) \delta_{gh} ,
\] 
and the partial skew group algebra turns out to be unital, with
$1_{A\rtimes_{\alpha} G} =1_A \delta_e$.

In the case of unital partial actions, the sufficient conditions for the associativity of the partial crossed product are automatically satisfied.

The role played by the partial skew group algebra is fundamental for the development of several aspects of the theory. As we shall see later, it plays a central role in globalization, Galois theory for partial actions, and partial representations.

Just to summarize what has been exposed in this section, let us relate the algebra of the groupoid of a partial action with the partial skew group algebra. Consider a partial action $\alpha$ of a finite group $G$ on a finite set $X$. We just have seen that $\alpha$ induces a partial action $\theta$ of $G$ on the algebra $A=\mbox{Fun}(X, k)$ given by (\ref{actiononfunctions}). Also we have the groupoid of a partial action $\mathcal{G}=\mathcal{G}(G,X,\alpha)$. Let us write the elements of $\mathcal{G}$ in a slightly different, although equivalent form:
\[
\mathcal{G}(G,X,\alpha)=\{ (x,g)\in X\times G \;| \; x\in X_g \} .
\]
The groupoid algebra $k\mathcal{G}(G,X,\alpha)$ is generated as a $k$-vector space by the basis elements $\delta_{(x,g)}$ whose product is given by 
\[
\delta_{(x,g)}.\delta_{(y,h)}=\delta_{(x,gh)} [\! [ \alpha_{g^{-1}} (x)=y ]\! ] , 
\]
in which $[\! [ \quad ]\! ]$ denotes the boolean function which takes value $1$ when the sentence between brackets is true and the value $0$ when the sentence between brackets is false, and the unit is given by
\[
\mathbf{1}=\sum_{x\in X} \delta_{(x,e)} .
\]
Then we have the following result:

\begin{thm} \label{partialskewisgroupoid} Given a partial action $\alpha$ of a finite group $G$ on a finite set $X$, the algebra of the partial action groupoid $k\mathcal{G}(G,X,\alpha)$ is isomorphic to the partial skew group algebra $\mbox{Fun}(X,k)\rtimes_{\theta} G$, in which the partial action $\theta$ of $G$ on the algebra of functions $\mbox{Fun}(X,k)$ is that induced by the partial action $\alpha$.
\end{thm}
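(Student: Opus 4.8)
The plan is to produce an explicit linear bijection between the two algebras, defined on their natural bases, and then verify it is a homomorphism. Since $X$ is finite, $A:=\mbox{Fun}(X,k)$ has the $k$-basis $\{e_x\}_{x\in X}$ of characteristic functions of points, with $e_x e_y=[\![\,x=y\,]\!]\,e_x$ and $1_A=\sum_{x\in X}e_x$; the central idempotent cutting out the ideal $A_g$ is $1_g=\chi_{X_g}=\sum_{x\in X_g}e_x$, so $e_x\in A_g$ exactly when $x\in X_g$. Consequently $\{\,e_x\delta_g\;:\;g\in G,\ x\in X_g\,\}$ is a $k$-basis of $A\rtimes_\theta G=\bigoplus_{g\in G}A_g\delta_g$, indexed by precisely the set $\{\,(x,g)\;:\;x\in X_g\,\}$ that also indexes the basis $\{\delta_{(x,g)}\}$ of $k\mathcal{G}(G,X,\alpha)$ in the description recalled immediately before the statement. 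I would therefore define $\Phi\colon k\mathcal{G}(G,X,\alpha)\to A\rtimes_\theta G$ to be the linear map with $\Phi(\delta_{(x,g)})=e_x\delta_g$; being a bijection on bases it is automatically a linear isomorphism, and it remains only to check that $\Phi$ preserves products and the unit.

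The first preliminary step is to compute how $\theta$ acts on point masses. Directly from (\ref{actiononfunctions}), for $y\in X_{g^{-1}}$ one gets $\theta_g(e_y)=e_{\alpha_g(y)}$, since $(\theta_g e_y)(x)=e_y(\alpha_{g^{-1}}(x))=[\![\,x=\alpha_g(y)\,]\!]$ for $x\in X_g$ (using $(\alpha_g)^{-1}=\alpha_{g^{-1}}$) and $\alpha_g(y)\in X_g$. The unit is then immediate: $\Phi(\mathbf 1)=\sum_{x\in X}e_x\delta_e=1_A\delta_e=1_{A\rtimes_\theta G}$, since $X_e=X$. For multiplicativity I would take $x\in X_g$ and $y\in X_h$ and expand $(e_x\delta_g)(e_y\delta_h)$ by the partial crossed product formula (\ref{produtocruzado}): it equals $\theta_g\big(\theta_{g^{-1}}(e_x)\,e_y\big)\delta_{gh}=\theta_g\big(e_{\alpha_{g^{-1}}(x)}\,e_y\big)\delta_{gh}$, where $\alpha_{g^{-1}}(x)\in X_{g^{-1}}$ because $x\in X_g$. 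Now $e_{\alpha_{g^{-1}}(x)}e_y$ is $e_y$ when $\alpha_{g^{-1}}(x)=y$ and $0$ otherwise, and in the former case $y\in X_{g^{-1}}\cap X_h$, so that $\theta_g(e_y)=e_{\alpha_g(y)}=e_x$ and $x=\alpha_g(y)\in\alpha_g(X_{g^{-1}}\cap X_h)=X_g\cap X_{gh}$ (which makes $e_x\delta_{gh}$ a legitimate basis element); hence $(e_x\delta_g)(e_y\delta_h)=[\![\,\alpha_{g^{-1}}(x)=y\,]\!]\,e_x\delta_{gh}$. This is exactly $\Phi(\delta_{(x,g)}\delta_{(y,h)})$ by the product rule for $k\mathcal{G}(G,X,\alpha)$ recalled in the excerpt. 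Extending bilinearly, $\Phi$ is a unital algebra isomorphism.

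I do not anticipate a real obstacle: once the bases are identified the verification is a short computation. The only point demanding care is the bookkeeping of domains — checking at each step that the point masses involved lie in the correct ideals $A_g$, $A_{g^{-1}}$, $A_{gh}$, and that the index of the output genuinely satisfies $x\in X_{gh}$ — which is governed by the relations $(\alpha_g)^{-1}=\alpha_{g^{-1}}$ and $\alpha_g(X_{g^{-1}}\cap X_h)=X_g\cap X_{gh}$ noted among the remarks following the definition. Finiteness of $G$ and $X$ enters only to guarantee that $\mbox{Fun}(X,k)$ is spanned by the point masses $e_x$ and that the sums $1_A=\sum_x e_x$ and $\mathbf 1=\sum_x\delta_{(x,e)}$ are finite, so the bases above are as described.
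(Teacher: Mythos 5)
Your proposal is correct and follows essentially the same route as the paper: the paper defines mutually inverse maps $\Phi(a\delta_g)=\sum_{x\in X_g}a(x)\delta_{(x,g)}$ and $\Psi(\delta_{(x,g)})=\chi_x\delta_g$, so your $\Phi$ is exactly the paper's $\Psi$, and checking multiplicativity on the point-mass basis elements $e_x\delta_g$ is the same computation the paper carries out for general $a\delta_g$, just organized basis element by basis element. Your domain bookkeeping (via $(\alpha_g)^{-1}=\alpha_{g^{-1}}$ and $\alpha_g(X_{g^{-1}}\cap X_h)=X_g\cap X_{gh}$) matches the steps implicit in the paper's chain of equalities.
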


\begin{proof} Define two maps $\Phi :\mbox{Fun}(X,k)\rtimes_{\theta} G \rightarrow k\mathcal{G}(G,X,\alpha)$ and $\Psi : k\mathcal{G}(G,X,\alpha) \rightarrow \mbox{Fun}(X,k)\rtimes_{\theta} G $ using the formulas
\[
\Phi (a\delta_g )=\sum_{x\in X_g }a(x) \delta_{(x,g)} , \qquad \Psi (\delta_{(x,g)})=\chi_x \delta_g. 
\]
Here $\chi_x$ is the characteristic function on $x\in X_g$, that is, $\chi_x (y)=[\! [ x=y ]\! ]$. It is easy to see that $\Phi =\Psi^{-1}$:
\begin{eqnarray*}
\Phi (\Psi (\delta_{(x,g)}))&=& \Phi (\chi_x \delta_g ) =\sum_{y\in X_g} \chi_x (y) \delta_{(y,g)}=\delta_{(x,g)};\\
\Psi (\Phi (a\delta_g ))&=&\sum_{x\in X_g} a(x) \Psi (\delta_{(x,g)})=\sum_{x\in X_g} a(x)\chi_x \delta_g =a\delta_g .
\end{eqnarray*}
$\Phi$ is a morphism of algebras since
\begin{eqnarray*}
&&\hspace*{-20mm}
\Phi ((a\delta_g )(b\delta_h ) = \Phi (\theta_g (\theta_{g^{-1}} (a)b) \delta_{gh})
= \sum_{x\in X_{gh}} \theta_g (\theta_{g^{-1}} (a)b)(x) \delta_{(x, gh)} \\
& = & \sum_{x\in X_{gh}} (\theta_{g^{-1}} (a))(\alpha_{g^{-1}}(x)) b(\alpha_{g^{-1}}(x)) \delta_{(x, gh)} \\
&=& \sum_{x\in X_{gh}} a(x) b(\alpha_{g^{-1}} (x)) [\! [ \alpha_{g^{-1}} (x)\in X_h ]\! ] \delta_{(x,gh)} \\
& = & \sum_{x\in X_g \cap X_{gh}} a(x) b(\alpha_{g^{-1}} (x))\delta_{(x,gh)}\\
&=&\sum_{x\in X_g}\sum_{y\in X_h} a(x)b(y) [\! [ \alpha_{g^{-1}} (x)=y ]\! ] \delta_{(x,gh)} \\
& = & \sum_{x\in X_g}\sum_{y\in X_h} a(x)b(y) \delta_{(x,g)} \delta_{(y,h)}
= \Phi (a\delta_g )\Phi (b\delta_h) .
\end{eqnarray*}
\end{proof}

An immediate consequence is that the groupoid algebra $k\mathcal{G}(G,X,\alpha)$, is a weak Hopf algebra, with structure
\[
\Delta (\delta_{(x,g)})=\delta_{(x,g)} \otimes \delta_{(x,g)}, \quad \epsilon_{\delta_{(x,g)}}=1, \quad S(\delta_{(x,g)})=\delta_{(\alpha_{g^{-1}}(x), g^{-1})} ,
\]
the partial skew group algebra is endowed with a structure of a weak Hopf algebra, with structure
\[
\Delta (a\delta_g )=\sum_{x\in X_g} a(x)\chi_x \delta_g \otimes \chi_x \delta_g , \quad \epsilon (a\delta_g ) =\sum_{x\in X_g} a(x), \quad S(a\delta_g )=\theta_{g^{-1}}(a) \delta_{g^{-1}} .
\]

\section{Partial (co)actions of Hopf algebras}

The most remote sources of partial actions of Hopf algebras can be found in the theory of partial Galois extensions, which was a generalization of the Galois theory for commutative rings by Chase, Harrison and Rosenberg to the case of partial group actions.  In order to be more precise, let us recall some concepts involved in the Galois theory for partial actions. Given a commutative ring $k$ and a unital partial action $\alpha$, of a group $G$ on a $k$-algebra $A$, we define the partial invariant subalgebra as
\[
A^{\alpha} =\{ a\in A \; | \; \alpha_g (a1_{g^{-1}})=a1_g \; \} .
\]

\begin{defi} \cite{DFP} Let $k$ be a commutative ring and $A$ and $B$ be $k$-algebras and $\alpha$ is a unital partial action of a finite group $G$ on $A$. The algebra $A$ is said to be a partial Galois extension of $B$ if
\begin{enumerate}
\item[(i)] $A^{\alpha} =B$.
\item[(ii)] There exist elements $x_i , y_i \in A$, $1\leq i \leq n$ such that $\sum_{i=1}^n x_i \alpha_g (y_i 1_{g^{-1}} )=\delta_{g,e}$, for each $g\in G$.
\end{enumerate}
\end{defi}

Actually, there are several equivalent conditions to say that $A$ is a partial Galois extension of $B$ (see \cite[Theorem 4.1]{DFP}. Moreover, the concept of partial Galois extension can be rephrased in terms of Galois corings. Consider a unital partial action of a finite group $G$ on an algebra $A$. Out of this data one can construct an $A$-coring 
\cite{CdG} 
\[
\mathcal{C} =\bigoplus_{g\in G} A_g v_g = \bigoplus_{g\in G} (A1_g) v_g ,
\] 
whose $A$-bimodule structure is given by
\[
a(b v_g )a' =ab \alpha_g (a' 1_{g^{-1}}) ,
\]
and whose comultiplication and counit are given, respectively, by
\[
\Delta_{\mathcal{C}} (av_g) =\sum_{h\in G} av_h \otimes_A v_{h^{-1}g} , \qquad \epsilon_{\mathcal{C}} (av_g) = a\delta_{g,e} .
\]
Note that in the case of non-unital partial actions we cannot define the right $A$ module structure on $\mathcal{C}$. Also, the condition of $G$ being finite is necessary in order to define the comultiplication above. In the coring $\mathcal{C}$, one can prove that the element $x=\sum_{g\in G} v_g$ is grouplike and the condition of $A^{\alpha}\subset A$ being a partial Galois extension is equivalent to say that the canonical map
\[\mbox{can}:\ A\otimes_{A^{\alpha}} A\to \mathcal{C},~~\mbox{can}(a\otimes b)= \sum_{g\in G} a\alpha_g (b1_{g^{-1}}) v_g.\]
is an isomorphism. In fact, It is shown in \cite{CdG} that the classical results of Hopf-Galois theory and its relation to Morita theory are
still valid in the context of partial actions. 

The turning point in the theory was given in reference \cite{CJ}, in which coring techniques were strongly used to extend the notion of partial actions to the Hopf algebraic context. The underlying construction behind partial coactions of Hopf algebras are the so-called partial entwining structures, while their dual notion, the partial smash product structures, leads to the concept of a partial action of a Hopf algebra. For our purposes, we will define partial (co)actions of Hopf algebras directly in the form they are used currently in the literature.

\begin{defi} Let $k$ be a commutative ring, $H$ be a $k$-bialgebra and $A$ a unital $k$-algebra.
A left partial action of $H$ on $A$ is a linear map
$$ \cdot :\ H\otimes A \to A ,~~h\otimes a  \mapsto  h\cdot a,$$
satisfying the conditions
\begin{enumerate}
\item[(PA1)] $1_H \cdot a =a$, for all $a\in A$;
\item[(PA2)] $h\cdot (ab) =(h_{(1)}\cdot a)(h_{(2)}\cdot b)$, for all $h\in H$ and $a,b\in A$;
\item[(PA3)] $h\cdot (k\cdot a)=(h_{(1)}\cdot 1_A )(h_{(2)}k\cdot a)$, for all $h,k\in H$ and $a\in A$.
\end{enumerate}
The partial action is symmetric if, in addition, we have
\begin{enumerate}
\item[(PA3')] $h\cdot (k\cdot a)=(h_{(1)}k\cdot a)(h_{(2)}\cdot 1_A )$, for all $h,k\in H$ and $a\in A$.
\end{enumerate}
$A$ is called a left partial $H$-module algebra. The definition of a right (symmetric) partial action of $H$ on $A$ is completely analogous.\\
A right partial coaction of $H$ on $A$ is a linear map
$$ \rho :\\ A \to A\otimes H,~~\rho(a)= a^{[0]}\otimes a^{[1]},$$
satisfying the contions
\begin{enumerate}
\item[(PC1)] $(Id_A \otimes \epsilon ) \circ \rho (a) =a$, for all $a\in A$;
\item[(PC2)] $\rho(ab) =\rho(a)\rho (b)$, for all $a,b\in A$;
\item[(PC3)] $(\rho \otimes Id_H) \circ \rho(a)=\left( (Id_A \otimes \Delta )\circ \rho (a) \right) (\rho(1_A ) \otimes 1_H)$, for all $a\in A$.
\end{enumerate}
The partial coaction is symmetric if, in addition, we have
\begin{enumerate}
\item[(PC3')] $(\rho \otimes Id_H) \circ \rho(a)= (\rho(1_A ) \otimes 1_H)\left( (Id_A \otimes \Delta )\circ \rho (a) \right) $ for all $a\in A$.
\end{enumerate}
We say that $A$ is a right partial $H$-comodule algebra. The definition of a left (symmetric) partial coaction of $H$ on $A$ is completely analogous.
\end{defi}

\begin{remarks}
1)
Remark that a left $H$-module algebra $A$, tis automatically a left symmetric partial $H$-module algebra. In fact, a left partial 
$H$-module algebra is a left $H$-module algebra if and only if $h\cdot 1_A =\epsilon (h) 1_A$, for all $h\in H$. Analogously, a right $H$-comodule algebra,is automatically a right symmetric partial $H$-comodule algebra. A right partial $H$-comodule algebra is a right $H$-comodule algebra if and only if $\rho (1_A ) =1_A \otimes 1_A$. \\
2) A genuine partial $H$-module algebra is not an $H$-module, because the axiom (PA3) destroys  the associativity required to be a module. A similar observation holds for a partial $H$-comodule algebra, because the axiom (PC3) destroys the coassociativity of the definition of an $H$-comodule.\\
3) The axiom (PA2) leads to the conclusion that the map $e:\ H\rightarrow A$, which associates to each $h\in H$ the element $h\cdot 1_A \in A$, is an idempotent in the convolution algebra $\mbox{Hom}_k (H,A)$. This is very useful to obtain many results in the theory. Similarly, the axiom (PC2) leads to the conclusion that $\rho (1_A )$ is an idempotent in $A\otimes H$, therefore, the image of $\rho$ lies in a direct summand of the $k$-module $A\otimes H$. \\
4) We only deal with symmetric partial (co)actions because those are required for proving most of the important results. We will mainly focus on partial actions. Nevertheless, there are nice results involving partial coactions of Hopf algebras which will be commented briefly along this paper. IIn general, we restrict to the case where $k$ is a field, although most of the results can be extended to the case when $k$ is a commutative ring. 
\end{remarks}

Partial actions and partial coactions are dually related \cite{AB, AB3}. Let $H$ be a Hopf algebra and  $A$ a right partial $H$-comodule algebra with partial coaction $\rho :A\rightarrow A\otimes H$, then $A$ can be endowed with the structure of a left partial $H^\circ$-module algebra, with $H^\circ$ denoting the finite dual of $H$. Explicitly, the partial $H^\circ$ action is given by
\begin{equation}\label{duality}
h^* \cdot a= a^{[0]} h^* (a^{[1]}), 
\end{equation}
for all $a\in A$ $h^* \in H^\circ$ and $\rho (a)=a^{[0]}\otimes a^{[1]}\in A\otimes H$. Dually, if $A$ is a left partial $H$-module algebra such that for each $a\in A$ the subspace $H\cdot a$ is finite dimensional, that is there exists $a_i \in A$ and $h^*_i \in H^\circ$, for $i\in \{ 1, \ldots n \}$ such that 
\[
h\cdot a=\sum_{i=1}^n h^*_i (h) a_i ,
\]
then, $A$ is a right partial $H^\circ $ comodule algebra with partial coaction
\[
\rho (a)= \sum_{i=1}^n a_i \otimes h^*_i .
\] 

\begin{exmp} \label{unitalpartialaction} \cite{CJ} As a basic example, to make a bridge with the previous section, consider  a unital partial action $\alpha$ of a group $G$ on a
unital algebra $A$. Recall that a unital partial action means that the ideals $A_g$  are of the form $a_g =A1_g$ in which $1_g$ is a central idempotent in $A$ for each $g\in G$. This defines a partial action of the group algebra $kG$ on $A$, which is defined on the basis elements by
\begin{equation}
\label{actiongroupalgebra}
\delta_{g} \cdot a =\alpha_g (a 1_{g^{-1}}),
\end{equation}
and extended linearly to all elements of $kG$. 

Note that axiom (PA1) translates into the fact that the neutral element of the group acts as the identity oon $A$, the axiom  (PA2) reflects the fact that each $\alpha_g : A_{g^{-1}}\rightarrow A_g$ is a unital isomorphism and (PA3) can be rewritten as
\[
\delta_g \cdot (\delta_h \cdot a)=1_g (\delta_{gh} \cdot a)=(\delta_g \cdot 1_A )(\delta_{gh} \cdot a)=(\delta_{gh} \cdot a)(\delta_g \cdot 1_A ).
\]
Therefore, the partial action is symmetric because the idempotents $1_g$ are central.
\end{exmp}

The connection between unital partial actions of a group $G$ and the corresponding partial actions of the group algebra $kG$ allows one to see two different points of view concerning partiality. When we consider partial group actions, we end up with partially defined isomorphisms on restricted domains inside the whole algebra, while when we look at the partial action from the Hopf algebra perspective, what is a watermark of partiality is the lack of the structure of module. This discussion will be resumed later, when we are dealing with partial representations and partial modules.

\begin{exmp} \cite{AB} Let $H$ be a bialgebra, $B$ be a left $H$-module algebra and $e=e^2 \in B$ be an idempotent central in $B$. One can define a partial action of $H$ on the ideal $A=eB$ by
\[
h\cdot (eb)=e(h\triangleright (eb)), \qquad \mbox{ for  } b\in B .
\]
Actually, as we shall see later, the globalization theorem states that every partial acion of a Hopf algebra is of this type.
\end{exmp}

\begin{exmp} Let $\mathfrak{g}$ be a Lie algebra, then every partial action of its universal enveloping algebra, $\mathcal{U}(\mathfrak{g})$ is global \cite{CJ}.
\end{exmp}

\begin{exmp} \cite{AAB} Consider a field $k$ and a $k$-Hopf algebra $H$, a symmetric partial action of $H$ on the base field $k$ is equivalent to a linear functional $\lambda :H\rightarrow k$ satisfying
\begin{enumerate}
\item[(i)] $\lambda (1_H )=1$;
\item[(ii)] $\lambda (h)= \lambda (h_{(1)}) \lambda (h_{(2)})$;
\item[(iii)] $ \lambda (h) \lambda (k) =\lambda (h_{(1)}) \lambda (h_{(2)}k)=\lambda (h_{(1)} k) \lambda (h_{(2)})$.
\end{enumerate}

For the group algebra $kG$, denoting $\lambda (\delta_g )$ simply by $\lambda_g$, for every $g\in G$, we have the conditions
\[
\lambda_e =1, \qquad \lambda_g =\lambda_g \lambda_g , \qquad \lambda_g \lambda_h =\lambda_{gh} \lambda_g .
\]
The second equality tells us that the only possibilities are $\lambda_g =0$ or $\lambda_g =1$. Consider the following subset
\[
H=\{ g\in G \; | \; \lambda_g =1 \} .
\]
One can easily check that $H$ is a subgroup of $G$. Therefore, partial actions of $kG$ on the base field $k$ are parametrized by subgroups of $G$.

For a finite group $G$, a partial $G$-grading on an algebra $A$ is by definition a symmetric partial action of the dual group algebra $(kG)^*$ on $A$. In particular, a partial $G$-grading on the base field $k$ is defined as a  symmetric partial action of $(kG)^*$ on $k$. The Hopf algebra $(kG)^*$ is the dual of the group algebra of $G$ which is generated as algebra by the complete set of orthogonal idempotents $\{ p_g \}_{g\in G}$ and whose comultiplication, counit and antipode are given, respectively, by
\[
\Delta (p_g )=\sum_{h\in G} p_h \otimes p_{h^{-1}g} =\sum_{h\in G} p_{gh^{-1}} \otimes p_h, \quad \epsilon (p_g )=\delta_{g,e}, \quad S(p_g) =p_{g^{-1}}.
\]
In terms of the functional $\lambda$ the partial action has the following relations:
\begin{eqnarray*}
\sum_{g\in G} \lambda (p_g)  &= & 1,\\
\lambda (p_g ) & = & \sum_{h\in G} \lambda (p_h )\lambda (p_{h^{-1}}g) =\sum_{h\in G} \lambda (p_{gh^{-1}})\lambda (p_h ), \\
\lambda (p_g ) \lambda (p_h ) & = & \lambda (p_{gh^{-1}})\lambda (p_h )=\lambda (p_h )\lambda (p_{h^{-1}g}) .
\end{eqnarray*}
Define 
\[
H=\{g\in G \; | \; \lambda (p_g ) \neq 0 \},
\]
then we can prove that $H$ is a subgroup of $G$ and $\lambda (p_g )=\lambda (p_e )$ for every $g\in H$. This leads to $\lambda (p_g )=\frac{1}{|H|}$ for $g\in H$ and $\lambda (p_g )=0$ for $g\notin H$.

As one more example of this type, consider the Sweedler's four-dimensional algebra $H_4 =\langle 1,g,x \; | \; g^2 =1, \; x^2 =0, \; gx=-xg \rangle$, with Hopf structure given by
\[
\Delta (g)=g\otimes g, \quad \Delta (x)=x\otimes 1 +g\otimes x, \quad \epsilon (g)=1, \quad \epsilon (x)=0, \quad S(g)=g, \quad S(x)=-gx.
\]
In order to classify the partial actions of $H_4$ on the base field $k$ we end up with the following relations on the functional $\lambda$:
\[
\lambda (g)=\lambda (g)\lambda (g),\qquad  \lambda (g)\lambda (x)=\lambda (g) \lambda (gx)=0 , \qquad \lambda (gx ) \lambda (gx )=\lambda (x) \lambda (gx ).
\]
The first equation gives us two possibilities: Either $\lambda (g) =1$ and consequently $\lambda (x)=\lambda (gx)=0$, which is in fact a global action, or $\lambda (g)=0$ and $\lambda (x)=\lambda (gx )=\lambda$, these are all possibilities of partial $H_4$-actions on $k$.
\end{exmp}

Several results on partial $G$-gradings, or, equivalently, partial $(kG)^*$-actions, are presented in \cite{AB,ABV}. Theorem \ref{z2grad} gives us a classification of partially $\mathbb{Z}_2$-graded algebras. 

\begin{thm} \label{z2grad} \cite{ABV} 
Let $k$ be a field of characteristic different from $2$, let $C_2$ be the cyclic group $C_2 = \langle e,g \, |\,
g^2 =e \rangle$ and let $H$ be the dual group algebra $(k C_2)^*$. Let $A$ be a partially $\mathbb{Z}_2$-graded algebra, i.e.\ a partial $(k C_2)^*$-module algebra, then $A$ can be decomposed as a vector space,
\[
A = A_0 \oplus A_1 \oplus A_{\frac{1}{2}},
\]
such that
\begin{enumerate}
\item $ B = A_0 \oplus A_1$ is a subalgebra of $A$ which is $\mathbb{Z}_2$-graded by  $B_{\overline{0}} = A_0$ and $B_{\overline{1}} = A_1$;
\item  $A_{\frac{1}{2}}$ is an ideal of $A$;
\item the unit of $A$ decomposes as  $1_A = u + v$ where  $u,v$ are orthogonal idempotents, $u \in A_0$ and $v \in A_{\frac{1}{2}}$;
\item $uAv = vAu=0$ and therefore $A \simeq B \times A_{\frac{1}{2}}$ as an algebra.  
\end{enumerate}
\end{thm}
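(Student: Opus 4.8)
The plan is to diagonalize the action of the group-like element $\sigma = p_e - p_g$ of $(kC_2)^*$ — which is indeed group-like, $\Delta\sigma = \sigma\otimes\sigma$ and $\epsilon(\sigma)=1$, with $\sigma^2 = 1_H$ — and to present $A$ as the product of the block on which $\sigma$ acts as an involution and the block it annihilates; the latter will be $A_{\frac12}$.

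First I would set $\pi_e = p_e\cdot(-)$ and $\pi_g = p_g\cdot(-)$, so that $\pi_e + \pi_g = \mathrm{Id}_A$ by (PA1) and $T := \sigma\cdot(-) = \pi_e - \pi_g$; put $u_0 = \pi_e(1_A)$, $w_0 = \pi_g(1_A)$, so $u_0 + w_0 = 1_A$. Substituting $a=b=1_A$ in (PA2) for $p_e$ and using $\Delta(p_e) = p_e\otimes p_e + p_g\otimes p_g$ gives $u_0 = u_0^2 + w_0^2$, which (eliminating $w_0$ and using $\mathrm{char}(k)\neq 2$) becomes the quadratic $2u_0^2 - 3u_0 + 1_A = 0$. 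Comparing (PA3) with (PA3') for $h=k=p_e$ shows that $u_0$ commutes with $\pi_e(A)$, and for $h=k=p_g$ that it commutes with $\pi_g(A)$; since $\pi_e(a)+\pi_g(a)=a$, the element $u_0$ is central in $A$. From the quadratic relation it is then routine that $u := 2u_0 - 1_A$ and $v := 1_A - u = 2w_0$ are orthogonal central idempotents with sum $1_A$; note also $\sigma\cdot 1_A = u_0 - w_0 = u$.

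Next I would analyze $T$. Since $\sigma$ is group-like, (PA2) makes $T$ an algebra endomorphism of $A$, and (PA3) with $h=k=\sigma$ gives $T^2(a) = (\sigma\cdot 1_A)\,a = ua$, so $T^2$ is multiplication by the central idempotent $u$. A short computation with (PA3) (using $\sigma p_e = p_e$ and $\sigma p_g = -p_g$) yields $\sigma\cdot u_0 = u$, hence $\sigma\cdot u = u$ and $\sigma\cdot v = 0$, so by (PA2) the operator $T$ annihilates the ideal $vA$. On the other hand $B := uA$ is a unital subalgebra (with unit $u$) preserved by $T$, and $T|_B$ is a unital algebra automorphism with $(T|_B)^2 = \mathrm{Id}_B$; as $\mathrm{char}(k)\neq 2$, $B$ is the direct sum of the $(+1)$- and $(-1)$-eigenspaces of $T|_B$, which we call $A_0$ and $A_1$. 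Multiplicativity of $T$ forces $A_iA_j\subseteq A_{i+j}$, so $B = A_0\oplus A_1$ is $\mathbb{Z}_2$-graded; setting $A_{\frac12} := vA$ gives $A = A_0\oplus A_1\oplus A_{\frac12}$. Item (1) is then immediate, (2) holds because $v$ is a central idempotent, (3) because $1_A = u+v$ with $u = T(u)\in A_0$ and $v\in A_{\frac12}$, and (4) because $uv = 0$ with $u,v$ central, which also yields $A\simeq B\times A_{\frac12}$.

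The one genuinely delicate point will be that the element directly at hand, $u_0 = p_e\cdot 1_A$, is not idempotent: it satisfies merely $2u_0^2 - 3u_0 + 1 = 0$, whose roots are $1$ and $\tfrac12$. One must pass to $u = 2u_0 - 1$ to obtain a genuine idempotent, and this is precisely where $\mathrm{char}(k)\neq 2$ is used — both to invert $2$ and to keep those two roots distinct, i.e. to split off the ``global'' block $B$ from the ``degenerate'' block $A_{\frac12}$. Everything else is bookkeeping with the coproducts of $p_e,p_g$ and the axioms (PA1)--(PA3').
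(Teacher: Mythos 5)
Your proof is correct; I checked the computations ($u_0=u_0^2+w_0^2$ giving $2u_0^2-3u_0+1_A=0$, centrality of $u_0$ from comparing (PA3) with (PA3'), $T^2(a)=(\sigma\cdot 1_A)a=ua$, and $\sigma\cdot u_0=uu_0=u$) and they all go through. Note, however, that the survey itself offers no proof of this theorem -- it is quoted from [ABV] -- so the comparison is really with the route the paper sets up elsewhere: in Example 5.13 it computes $H_{par}\cong k[x]/\langle x(2x-1)(x-1)\rangle$ for $H=(kC_2)^*$, and via Theorems 5.11 and 5.16 a partial $H$-module algebra is an algebra in the monoidal category of modules over this three-dimensional split commutative semisimple algebra, so the decomposition $A=A_0\oplus A_1\oplus A_{\frac12}$ falls out as the eigenspace decomposition of $[p_e]$ for the eigenvalues $1,0,\frac12$. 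Your argument is the elementary, self-contained shadow of that: your operator $T=\sigma\cdot(-)=2\pi_e-\mathrm{Id}$ has eigenvalues $1,-1,0$ exactly where $\pi_e$ has $1,0,\frac12$, so the two decompositions coincide. What the $H_{par}$ route buys is uniformity (the same machine handles all partial modules, not just module algebras, and generalizes to other $H$); what your route buys is that it needs nothing beyond the axioms (PA1)--(PA3') and the single group-like $\sigma=p_e-p_g$, and it makes transparent where $\mathrm{char}(k)\neq 2$ enters, namely in separating the roots $1$ and $\frac12$ of $2t^2-3t+1$ and in diagonalizing the involution $T|_{uA}$. The one point worth stating explicitly in your write-up is that $u$ is central (inherited from the centrality of $u_0$), since items (2) and (4) lean on $vA$ being a two-sided ideal and on $uAv=Auv=0$.
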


\begin{exmp}In reference \cite{ABDP1}, a nontrivial example of a (twisted) partial action coming from algebraic groups was introduced. Let $k $ be an isomorphic copy of the complex numbers $\mathbb{C}$ and let $\mathbb{S}^1  \subseteq \mathbb{C}$ be the circle group, i.e. the group of all complex numbers of modulus one.   Let, furthermore, $G$ be an arbitrary finite group seen as a subgroup of $S_n$ for some $n$. Taking the action of  $G \subseteq S_n$  on $ (k \mathbb{S}^1 )^{\otimes n} $  by permutation of roots,  consider  the smash product Hopf algebra
\[
H'_1 = (k \mathbb{S}^1 )^{\otimes n} \rtimes k  G,
\]
which is co-commutative. Let $X\subseteq G$ be an arbitrary subset which is not a subgroup and consider the subalgebra $\tilde {A} =  (\sum _{g\in X} p_g ) (k G)^* \subseteq  (k  G)^*,$ and define the commutative algebra $A'=  k[t,t^{-1}]^{\otimes n} \otimes \tilde{A}$.

In order to  simplify the notation, write
\begin{equation}\label{notation1}
t_i = 1 \otimes \ldots \otimes 1 \otimes t \otimes 1\otimes \ldots \otimes 1,
\end{equation}
where $t$ belongs to the $i$-copy of $k[ t, t^{-1}]$, then, we have the elementary monomials in $k [ t, t^{-1}]$, given by
\[
t_1^{k_1}\ldots t_n^{k_n} = t^{k_1} \otimes \ldots \otimes t^{k_n} .
\]
In its turn, the generators of $(k \mathbb{S}^1 )^{\otimes n}$ can be written in terms of the roots  of unit $\chi_{\theta}$ in the following way
\begin{equation}\label{notation2}
\chi_{\theta_1 , \ldots \theta_n } = \chi_{\theta_1} \otimes \ldots \otimes \chi_{\theta_n } \in (k \mathbb{S}^1 )^{\otimes n},
\end{equation}
where $\chi_{\theta_i} \in \mathbb{S}^1 $ is the root of $1$ whose angular coordinate is $\theta _i $ and which belongs to the  
$i$th-factor of $(k \mathbb{S}^1 )^{\otimes n}$.

With notation established in (\ref{notation1}-\ref{notation2}), the formula
\begin{eqnarray*}
&&\hspace*{-2cm}
(\chi_{\theta_1 , \ldots \theta_n } \otimes u_g) \cdot   (t_1^{k_1} \ldots t_n^{k_n} \otimes p_s )  \\
&=&
\begin{cases}
\exp\{ i \sum_{j=1}^{n}k_j \theta_{  g s^{-1}  (j)}  \}  \; t_1^{k_1} \ldots t_n^{k_n} \otimes p_{ s g^{-1}   }
&{\rm if~}s^{-1} g \in X\\
0&{\rm if~}s^{-1} g \not\in X,
\end{cases}
\end{eqnarray*}
where $g \in G$ and $s \in X \subseteq G,$  gives an ordinary left partial action  $\cdot : H'_1 \times A' \to A' $. From this partial action, one can obtain a twisted partial action in the case of finite groups $G$ whose Schur multiplier over $\mathbb{C}$ is nontrivial. In order to define a partial two cocycle for the partial action of $H'_1$ over $A'$, one needs first a normalized two cocycle $\gamma: G\times G \rightarrow \mathbb{C}^*$ in the classical Schur multiplier of $G$. The details of this construction can be found in \cite{ABDP1}
\end{exmp}

More nontrivial examples of partial actions and twisted partial actions of Hopf algebras can be found in \cite{AAB, AB3,ABDP1, ABDP2}.

One of the most important constructions related to partial actions of Hopf algebras is the partial smash product. Given a partial action $\cdot :H\otimes A \rightarrow A$ of a bialgebra $H$ on an algebra $A$, one can endow the tensor product $A\otimes H$ with an associative product,
\[
(a\otimes h)(b\otimes k)=a(h_{(1)}\cdot b)\otimes h_{(2)}k .
\]
However, this algebra is not unital, the element $1_A \otimes 1_H$ works as a left unit, but not a right one. Thus, we have to restrict to the subspace
\[
\underline{A\# H }=(A\otimes H)(1_A \otimes 1_H)=\mbox{span} \{ a\# h =a(h_{(1)}\cdot 1_A )\otimes h_{(2)} \; | \; a\in A , \; h\in H \} .
\]
It is easy to see that this algebra will be associative and unital. For the case of $H$ being the group algebra $kG$, the partial smash product coincides with the partial skew group algebra presented in previous section. The role played by the partial smash product in the theory can be better grasped in the following two sections.

We mention some other constructions related to partial (co)actions of Hopf algebras without going into details. 
\begin{enumerate}
\item The twisted partial actions of Hopf algebras by partial 2-cocycles were treated in \cite{ABDP1}. Twisted partial actions lead to a generalization of the partial smash product, called the partial crossed product. Again, symmetric versions of these twisted partial actions are better behaved and from them it is possible to derive many properties, for example, one can extend the concept of a cleft extension for the partial case and we can prove that these partially cleft extensions are in one-to-one correspondence with partial crossed products. Lately, the partial crossed products were placed in a broader context in reference \cite{FRR}, there, the authors introduced the notion of a weak crossed product and showed that the partial crossed products obtained in \cite{ABDP1} were special cases of their construction. 
The globalization of twisted partial actions of Hopf algebras was treated in \cite{ABDP2}.

\item Partial actions of Hopf algebras on $k$-linear categories appeared in \cite{AAB}. There the concept of a partial smash product of a category by a Hopf algebra appeared. Many nontrivial examples were given, in particular the question about partial $G$-gradings first took place in this context. The globalization of a partial action of a Hopf algebra on a $k$-linear category was done in the same paper and the same results of \cite{AB} continued valid for categories.

\item The partial versions of an $H$-module coalgebra and $H$-comodule coalgebra were presented in \cite{BV}, even though we acknowledge that a first version of partial comodule coalgebras first appeared in \cite{wang}. A partial action of a cocommutative Hopf algebra $H$ on a commutative algebra endows the partial smash product $\underline{A\# H}$ with the structure of a Hopf algebroid over $A$. Similarly, given a right partial coaction of a Hopf algebra $H$ over an algebra $A$ defines a canonical $A$ coring 
\[
\underline{A\otimes H}=(A\otimes H )\rho (1_A ),
\]
this is well known from the theory of partial entwining structures \cite{CJ}. In the case when both $H$ and $A$ are commutative, then $\underline{A\otimes H}$ becomes a commutative Hopf algebroid. This relationship between partial coactions of commutative Hopf algebras on commutative algebras and commutative Hopf algebroids can be rephrased in a dual version of the Kellendonk-Lawson's theorem relating partial actions of groups on sets and groupoids \cite{KL}. Finally, given a left partial $H$-comodule coalgebra $C$, one can define a new coalgebra $\underline{C\blacktriangleright \! \! < H}$, the partial cosmash coproduct, which can be seen as a dual of the partial smash product \cite{BV}
\end{enumerate}

\section{Globalization}

As we have seen in section 2, one can create a partial action of a group $G$ out of a global one by restricting to a subset. More precisely, given an action $\beta :G\rightarrow \mbox{Bij}(Y)$ and a subset $X\subseteq Y$, for each $g\in G$ define the subsets
\[
X_g =X\cap \beta_g (X)
\]
and the maps $\widehat{\beta}_g =\beta_g|_{{}_{X_{g^{-1}}}}$, these data define a partial action of $G$ on $X$.  Conversely, given a partial action $\alpha =( \{ X_g \}_{g\in G} , \{ \alpha_g :X_{g^{-1}}\rightarrow X_g \}_{g\in G} )$, one may ask whether is it possible to obtain $\alpha$ from a global action on a bigger set by restriction. This is the problem of globalization. Of course there can be many globalizations for the same partial action, then we have to put a minimality condition in order to obtain a more precise definition.

\begin{defi} \label{globset} A globalization of a partial action $\alpha$ of a group $G$ on a set $X$ is a triple $(Y, \beta, \varphi )$, in which
\begin{enumerate}
\item[(GL1)] $Y$ is a set and $\beta :G\rightarrow \mbox{Bij}(Y)$ is an action of $G$ on $Y$;
\item[(GL2)] $\varphi :X\rightarrow Y$ is an injective map;
\item[(GL3)] for each $g\in G$, $\varphi (X_g )=\varphi (X)\cap \beta_g (\varphi (X))$;
\item[(GL4)] for each $x\in X_{g^{-1}}$, we have $\varphi (\alpha_g (x)) =\beta_g (\varphi (x))$.
\end{enumerate}
A globalization is said to be admissible if
\begin{enumerate}
\item[(GL5)] $Y=\cup_{g\in G} \beta_g (\varphi (X))$.
\end{enumerate}
\end{defi}

Let us make some considerations which are pertinent about the previous definition. First, the axioms (GL3) and (GL4) tell us that the map $\varphi$ is a morphism in $\underline{Psets}_G$ between the partial action $\alpha$ on $X$ and the partial action $\widehat{\beta}$ on $\varphi (X)\subseteq Y$ induced by the global action $\beta$. Second, as the map $\varphi$ is injective by axiom (GL2), in fact we have an isomorphism between these two partial actions. Finally, the admissibility axiom (GL5) imposes a minimality condition on the globalization, this is required in order to garantee uniqueness of the globalization up to isomorphism. 

The first globalization theorems appeared in F. Abadie's PhD thesis \cite{Abadie} and afterwards in reference \cite{AbadieTwo}, of the same author. 

\begin{thm} \cite{AbadieTwo} There exists an admissible globalization for any given partial action $\alpha$ of a group $G$ on a set $X$.
\end{thm}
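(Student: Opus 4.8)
The plan is to build the globalization explicitly as an orbit space, imitating the classical construction of inducing a global action from a partial one. First I would form the set $Y_0 = G \times X$ and define the naive $G$-action $g \cdot (h, x) = (gh, x)$ on it; this is visibly a global action, but it is far too large, so the real work is to quotient it down by an equivalence relation that glues together the copies of $X$ along the domains $X_g$ in the way prescribed by the $\alpha_g$. The relation I would use is: $(g, x) \sim (h, y)$ whenever $x \in X_{g^{-1}h}$ and $\alpha_{h^{-1}g}(x) = y$ (equivalently, $y \in X_{h^{-1}g}$ and $\alpha_{g^{-1}h}(y) = x$). One checks this is an equivalence relation using precisely axioms (i)--(iii) of Definition~1.1 together with the derived identities $\alpha_g^{-1} = \alpha_{g^{-1}}$ and $\alpha_g(X_{g^{-1}} \cap X_h) = X_g \cap X_{gh}$ recorded after the definition: reflexivity is axiom (i), symmetry is immediate from $\alpha_g^{-1} = \alpha_{g^{-1}}$, and transitivity is exactly the content of (ii) and (iii).

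Then I would set $Y = Y_0 / {\sim}$ with $q : Y_0 \to Y$ the quotient map, and note that the naive action descends: since $k \cdot (g, x) = (kg, x)$ and the relation $\sim$ only constrains the $X$-coordinate relative to the ratio of the two $G$-coordinates, left translation by $k$ sends $\sim$-classes to $\sim$-classes, giving a well-defined global action $\beta : G \to \mathrm{Bij}(Y)$ with $\beta_k([g,x]) = [kg, x]$. The inclusion $\varphi : X \to Y$ is $\varphi(x) = [e, x]$; it is injective because $[e,x] = [e, x']$ forces $x = \alpha_e(x) = x'$ by (i). Checking (GL4) is a one-line computation: $\beta_g(\varphi(x)) = [g, x]$ while $\varphi(\alpha_g(x)) = [e, \alpha_g(x)]$, and $[g, x] \sim [e, \alpha_g(x)]$ for $x \in X_{g^{-1}}$ directly from the definition of $\sim$. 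For (GL3) one shows $[h, y] \in \varphi(X) \cap \beta_g(\varphi(X))$ iff $[h,y]$ equals both $[e, a]$ and $[g, b]$ for some $a, b \in X$; unwinding the relation, this forces $a \in X_{g}$ (via $a \in X_{g^{-1}\cdot\text{(something)}}$ type conditions combined with $\alpha_g(X_{g^{-1}}\cap X_h)=X_g\cap X_{gh}$), and conversely any $[e,a]$ with $a \in X_g$ lies in the intersection since $[e,a] = [g, \alpha_{g^{-1}}(a)]$. Admissibility (GL5) is automatic from the construction: every class $[g, x]$ equals $\beta_g([e,x]) = \beta_g(\varphi(x))$, so $Y = \bigcup_{g} \beta_g(\varphi(X))$ with nothing left to prove.

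I expect the main obstacle to be verifying that $\sim$ is genuinely transitive and that (GL3) holds in both directions; both reduce to careful bookkeeping with the domain-intersection identities, and the subtlety is that one must repeatedly invoke $\alpha_g(X_{g^{-1}} \cap X_h) = X_g \cap X_{gh}$ and axiom (ii) to ensure that the intermediate points actually lie in the domains where the $\alpha$'s are defined — a naive composition argument would be circular without these. Everything else is formal: the descent of the action, injectivity of $\varphi$, and (GL4)--(GL5) are short. As a remark I would add that uniqueness of the admissible globalization up to isomorphism of $\underline{Psets}_G$ follows by a standard argument — given another admissible $(Y', \beta', \varphi')$, the assignment $[g,x] \mapsto \beta'_g(\varphi'(x))$ is well-defined (by (GL4) for $\beta'$) and equivariant, and admissibility of $Y'$ makes it surjective while (GL3) makes it injective — though the theorem as stated only asserts existence.
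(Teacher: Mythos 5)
Your construction is exactly the one in the paper: the same equivalence relation $(g,x)\sim (h,y)$ iff $x\in X_{g^{-1}h}$ and $\alpha_{h^{-1}g}(x)=y$ on $G\times X$, the same descended action $\beta_g[(h,x)]=[(gh,x)]$, and the same inclusion $\varphi(x)=[(e,x)]$, with the verifications of (GL1)--(GL5) that the paper declares straightforward spelled out in somewhat more detail. The proposal is correct and takes essentially the same approach.
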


\begin{proof} The idea of the construction of the globalization is to establish an equivalence relation on the set $G\times X$,
namely 
$(g,x)\sim (h,y)$ if and only if $x\in X_{g^{-1}h}$ and  $\alpha_{h^{-1}g}(x)=y$.
The quotient $X^e =(G\times X)/\sim$ is the total space and the global action is given by the formula
$\beta_g [(h,x)]=[(gh,x)]$, 
where $[(h,x)]$ is the class of the pair $(h,x)$ in $X^e$. It is easy to see that this action is well defined. The inclusion map 
$\varphi :X\rightarrow X^e$ is given by $\varphi (x)=[(e,x)]$, where $e$ denotes the neutral element of $G$. The verification of the axioms (GL1-GL5) in Definition \ref{globset} is straightforward.
\end{proof}

Some difficulties arise when we are dealing with partial actions with extra conditions, like partial actions of groups on topological spaces or partial actions of groups on algebras. For the topological case, the first problem arises when we consider the globalized space $X^e$. As it is a quotient, even when we start with a partial action on a Hausdorff space, in general we end up with a non Hausdorff one. For the example of the partial action of $PSL(2, \mathbb{C})$ on the complex plane by M\"obius transformations, Kellendonk and Lawson proved that the globalized space with its quotient topology coincides with the one point compactification of the complex plane, the Riemann sphere \cite{KL}, as anyone could expect. Now, consider the the example \ref{flux}, describing the flow of a vector field on a differentiable manifold $M$, which corresponds to a partial action of the additive group $(\mathbb{R}, +)$ on $M$. The globalization in this case would produce a non-Hausdorff manifold. In order to characterize the new global action as a complete flow of a new vector field on a manifold, perhaps some techniques coming from noncommutative geometry might be needed. Another form to overcome the non-Hausdorffness of the globalized space is to consider the groupoid of the equivalence relation as a subset of $(G\times X)\times (G\times X)$ instead of considering the quotient $(G\times X)/\sim$ \cite{EGG}. This approach has some advantages, but it requires some techniques coming from groupoids in order to analyse the globalization.

For the case of partial actions of groups on algebras, Definition \ref{globset} of a globalization needs some modifications.

\begin{defi} \label{globalg} \cite{dok} An enveloping action (globalization) of a partial action $\alpha$ of a group $G$ on an algebra $A$ is a triple $(B, \beta, \varphi )$, in which
\begin{enumerate}
\item[(GL1)] $B$ is an algebra, not necessarily unital, and $\beta :G\rightarrow \mbox{Aut}(B)$ is an action of $G$ on $B$ by automorphisms;
\item[(GL2)] $\varphi :A\rightarrow B$ is an algebra monomorphism and $\varphi (A)$ is an ideal in $B$;
\item[(GL3)] for each $g\in G$, $\varphi (A_g )=\varphi (A)\cap \beta_g (\varphi (A))$;
\item[(GL4)] for each $x\in A_{g^{-1}}$, we have $\varphi (\alpha_g (x)) =\beta_g (\varphi (x))$.
\end{enumerate}
A globalization is said to be admissible if
\begin{enumerate}
\item[(GL5)]  $B=\sum_{g\in G} \beta_g (\varphi (A))$.
\end{enumerate}
\end{defi}

Once $\varphi (A)\trianglelefteq B$ then for every $g\in G$ the subset $\beta_g (A)$ is an ideal of $B$ as well. The main results about globalization of partial actions of groups on algebras can be resumed in the following Theorem.

\begin{thm} \label{globthmgroup} \cite{dok} Let $\alpha$ be a partial action of a group $G$ on an algebra $A$, then the following statements hold.
\begin{enumerate}
\item The partial action $\alpha$ admits an enveloping action if and only if the partial action is unital, that is, each ideal $A_g$ is generated by a central idempotent $1_g \in A$ and each $\alpha_g$ is a unital isomorphism between the algebras $A_{g^{-1}}$ and $A_g$. Moreover, this enveloping action is unique up to isomorphism.
\item If $\alpha$ admits an enveloping action $(B, \beta , \varphi )$, then the partial skew group algebra $A\rtimes_{\alpha} G$ is Morita equivalent to the skew group algebra $B\rtimes_{\beta} G$.
\end{enumerate}
\end{thm}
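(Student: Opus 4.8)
\textbf{Plan for the proof of Theorem \ref{globthmgroup}.}

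The plan is to address the two parts in order, the existence/uniqueness of the enveloping action first and the Morita equivalence second.

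\emph{Part (1), necessity.} First I would show that if $(B,\beta,\varphi)$ is an enveloping action then $\alpha$ must be unital. Identifying $A$ with the ideal $\varphi(A)\trianglelefteq B$, note that if $B$ were unital the idempotents would be immediate; in general one uses that $\varphi(A)$ is an ideal together with the fact that $A$ itself is unital. Concretely, $1_A$ is an idempotent in $B$ that acts as a two-sided identity on the ideal $\varphi(A)$; for each $g$, the element $1_g:=\beta_g(1_A)\cdot 1_A$ (product taken in $B$) lies in $\varphi(A)\cap\beta_g(\varphi(A))=\varphi(A_g)$ by (GL3), is idempotent, and is central in $A$ because both $1_A$ and $\beta_g(1_A)$ are central idempotents in $B$ when restricted to the ideal $\varphi(A)$. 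One then checks $A_g=1_gA$ and that $\alpha_g=\beta_g|_{A_{g^{-1}}}$ carries $1_{g^{-1}}$ to $1_g$, so $\alpha_g$ is a unital isomorphism.

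\emph{Part (1), sufficiency and uniqueness.} Conversely, assuming $\alpha$ unital, I would construct $B$ explicitly. Mimicking the set-theoretic construction in the previous theorem, one natural model is to take $B$ to be the subalgebra of $\mathrm{Fun}(G,A)$ (functions $G\to A$) generated by the orbit of $A$ under the shift action $(\beta_g f)(h)=f(g^{-1}h)$, where $A$ embeds via $\varphi(a)(h)=\alpha_{h^{-1}}(a1_h)$. The axioms (GL1)--(GL5) are then verified directly: $\varphi$ is multiplicative by (PA2)-type identities for the group action, injective since $\varphi(a)(e)=a$, and its image is an ideal because the generating idempotents $1_g$ multiply orbit elements back into $\varphi(A)$. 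For uniqueness up to isomorphism, given two admissible enveloping actions one builds the comparison morphism on $\sum_g\beta_g(\varphi(A))$ by sending $\beta_g(\varphi(a))$ to $\beta'_g(\varphi'(a))$ and checks it is well defined (using (GL3)-(GL4) to control overlaps) and bijective by admissibility (GL5).

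\emph{Part (2), Morita equivalence.} Here the idea is that $A\rtimes_\alpha G$ appears as a corner of $B\rtimes_\beta G$. Extending $\beta$ to a (unital or multiplier) action, one shows that $1_A\delta_e$ (the image of $1_A\in\varphi(A)\subseteq B$) is an idempotent in the multiplier algebra of $B\rtimes_\beta G$ and that $(1_A\delta_e)(B\rtimes_\beta G)(1_A\delta_e)\cong A\rtimes_\alpha G$ via $a_g\delta_g\mapsto a_g\delta_g$, using (GL3) to identify $1_A\beta_g(1_A)B$ with $A_g$. Admissibility (GL5), which says $B=\sum_g\beta_g(\varphi(A))$, is exactly what forces the ideal generated by this idempotent to be all of $B\rtimes_\beta G$, giving that the corner is a full idempotent and hence the Morita equivalence. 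The main obstacle I expect is the bookkeeping in the non-unital setting: $B$ need not be unital, so the idempotent $1_A\delta_e$ lives in a multiplier algebra, and one must be careful that all the module structures and the fullness argument go through without a global identity; handling this cleanly (perhaps by first treating the unital case and then passing to multipliers, or by working with the explicit function-algebra model of $B$) is where the real work lies.
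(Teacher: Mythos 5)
The paper does not actually prove this theorem: it is stated as a cited result from \cite{dok}, and the only related material in the text is the sketch of the Hopf-algebra analogue (Theorem \ref{globthmhopf}), whose standard enveloping action inside $\mbox{Hom}_k(H,A)$ with the shift action is precisely the linear-dual counterpart of your $\mathrm{Fun}(G,A)$ model. Your outline is correct and coincides with the Dokuchaev--Exel argument: the idempotents $1_g=\varphi(1_A)\beta_g(\varphi(1_A))$ for necessity, the function-algebra realization and the overlap-controlled comparison map for existence and uniqueness, and the identification of $A\rtimes_\alpha G$ with the corner cut out by $1_A\delta_e$, which (GL5) makes full. The only small simplification available is that $1_A\delta_e=\varphi(1_A)\delta_e$ already lies in $B\rtimes_\beta G$ itself, so no multiplier algebra is needed for the idempotent; the genuine non-unital care is only in invoking Morita theory for idempotent (non-unital) rings, exactly as you flag.
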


The Globalization Theorem \ref{globthmgroup} reveals itself as a useful tool in the algebraic theory of partial actions. A remarkable example comes from the theory of partial Galois extensions \cite{DFP}. Consider a unital algebra $S$ and let $\alpha$ be a unital partial action of a finite group $G$ on $S$. Define the trace map of the partial action $t^{par}_{S/R}:S\rightarrow R$, in which $R=S^{\alpha}$ is the subalgebra of partial invariants, as
\[
t^{par}_{S/R} (s)=\sum_{g\in G} \alpha_g (s1_{g^{-1}}).
\]
Let $(S',\beta, \varphi)$ be the enveloping of the unital partial action $\alpha$. As the group is finite, the algebra $S'$ is unital.  Let $R'=S'^{\beta}$ be the subalgebra of invariants of $S'$, and $t_{S'/R'} :S' \rightarrow R'$ be the classical trace given by
\[
t_{S'/R'} (x)=\sum_{g\in G} \beta_g (x) .
\]
It is possible to relate the partial trace and the global trace and, in particular, one can conclude that $t^{par}_{S/R}$ is onto $R$ if and  only if $t_{S'/R'}$ is onto $R'$. Moreover, one can prove that $S$ is a partial Galois extension over $R$ if and only if $S'$ is a Galois extension over $R'$. These results allows one to generalize the classical theorems of Galois extensions of commutative rings \cite{CHR} to the case of Galois extensions by partial actions.

Globalization theorems for partial actions of Hopf algebras first appeared in \cite{AB}. The very notion of an enveloping action, or globalization, needed to be slightly modified. In the case of groups, if $\beta :G\rightarrow \mbox{Aut}(B)$ is a global action and $I\trianglelefteq B$ is an ideal, the translated $\beta_g (I)$ is also an ideal of $B$ for every $g\in G$. For Hopf algebras this is not the case, if $B$ is an $H$-module algebra, $I$ is an ideal of $B$ and  $h\in H$ , then the subspace $h\triangleright I$ need not to be an ideal at all. Then, many tricks involving ideals in the globalization theorem for partial actions of groups were no longer possible in the Hopf case.

\begin{defi} \label{globhopf} \cite{AB} An enveloping action (globalization) of a symmetric partial action $\cdot :H\otimes A \rightarrow A$ of a Hopf algebra $H$ on a unital algebra $A$ is a triple $(B, \triangleright, \varphi )$, in which
\begin{enumerate}
\item[(GL1)] $B$ is a not necessarily unital algebra with an $H$-module structure given by $\triangleright :H\otimes B \rightarrow B$ satisfying $h\triangleright (ab)=(h_{(1)}\triangleright a)(h_{(2)}\triangleright b)$;
\item[(GL2)] $\varphi :A\rightarrow B$ is a multiplicative $k$-linear map and $\varphi (1_A)$ is a central idempotent in $B$, turning $\varphi (A)$ into a unital ideal in $B$;
\item[(GL3)] for each $a\in A$ and $h\in H$, we have $\varphi (h\cdot a) =\varphi (1_A )(h\triangleright \varphi (a))$.
\end{enumerate}
A globalization is said to be admissible if
\begin{enumerate}
\item[(GL4)]  $B=H\triangleright \varphi (A)$.
\end{enumerate}
A globalization is said to be minimal if
\begin{enumerate}
\item[(GL5)] for any left $H$-submodule $M\leq B$, the condition $\varphi (1_A )M=0$ implies that $M=0$.
\end{enumerate}
\end{defi}

The axiom (GL3) states that the map $\varphi$ is a morphism of left partial $H$-module algebras between $A$ and $\varphi (A)$ with the induced partial action coming from the global action $\triangleright :H\otimes B \rightarrow B$. The axiom (GL4) of an admissible globalization is not enough to garantee the minimality of the enveloping action and consequently the uniqueness of the globalization up to isomorphism. To obtain the uniqueness, one needs to impose also the axiom (GL5), which is automatically valid in the group case. Again we are dealing with symmetric partial actions, although there is a nonsymmetric version of the definition of globalization in which $\varphi (A)$ is only required to be a right ideal in $B$ and $\varphi (1_A )$ is only an idempotent, not necessarily central, in $B$.

The main results about enveloping actions of partial actions of Hopf algebras can be put into a single theorem:

\begin{thm} \label{globthmhopf} \cite{AB} Let $\cdot :H\otimes A \rightarrow A$ be a symmetric partial action of a Hopf algebra $H$ then the following statements hold.
\begin{enumerate}
\item There is, up to isomorphism, a unique minimal enveloping action $(B,\triangleright ,\varphi )$.
\item If $(C, \blacktriangleright ,\theta )$ is another admissible (not necessarily minimal) enveloping action of $A$, then there is a surjective morphism of $H$-module algebras $\Phi :C\rightarrow B$ such that $\Phi \circ \theta =\varphi$, where $B$ is as in (1). If $(C, \blacktriangleright ,\theta )$ is minimal, then $\Phi$ is an isomorphism.
\item The partial smash product $\underline{A\# H}$ is Morita equivalent to the smash product $B\# H$, where $(B,\triangleright ,\varphi )$ is the minimal enveloping action.
\end{enumerate}
\end{thm}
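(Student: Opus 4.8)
The plan is to realize the minimal enveloping action concretely inside a convolution algebra, read off its universal property from that model, and then obtain the Morita equivalence from a full idempotent in the smash product.

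\textbf{Construction and existence in (1).} I would work inside the convolution algebra $\Hom(H,A)$, with product $(f\ast f')(h)=f(h_{(1)})f'(h_{(2)})$ and left $H$-module structure $(h\triangleright f)(l)=f(lh)$; a routine check shows this is a global left $H$-module algebra. Define $\varphi\colon A\to\Hom(H,A)$ by $\varphi(a)(h)=h\cdot a$. Then (PA2) says exactly that $\varphi$ is an algebra map, (PA2) applied to $1_A=1_A1_A$ makes $\varphi(1_A)$ idempotent, and (PA1) gives $\varphi(a)(1_H)=a$, so $\varphi$ is injective. Put $B:=H\triangleright\varphi(A)$, the $H$-submodule of $\Hom(H,A)$ generated by $\varphi(A)$. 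The first real work is to check, by Sweedler calculus using (PA2), (PA3), the symmetry axiom (PA3') \emph{and} the antipode of $H$, that $B$ is closed under $\ast$ (so that $(B,\triangleright)$ is an $H$-module algebra), that $\varphi(A)$ is a two-sided ideal of $B$ with $\varphi(a)\ast(h\triangleright\varphi(b))=\varphi\bigl(a(h\cdot b)\bigr)$ and $(h\triangleright\varphi(b))\ast\varphi(a)=\varphi\bigl((h\cdot b)a\bigr)$, that $\varphi(1_A)$ is a central idempotent of $B$ which is a unit for $\varphi(A)$, and that $\varphi(1_A)\ast(h\triangleright\varphi(a))=\varphi(h\cdot a)$. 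Granting these, (GL1)--(GL2) of Definition~\ref{globhopf} are immediate, (GL3) is the last of these identities, and (GL4) holds by construction. For minimality (GL5): if $M\le B$ is an $H$-submodule with $\varphi(1_A)M=0$, then $g\triangleright m\in M$ for $m\in M$ and $g\in H$, so $\varphi(1_A)\ast(g\triangleright m)=0$; evaluating at $1_H$ gives $m(g)=0$, hence $M=0$. This produces a minimal enveloping action.

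\textbf{Universal property, and uniqueness in (1).} For (2) let $(C,\blacktriangleright,\theta)$ be an admissible enveloping action. Since $\theta$ is a monomorphism onto an ideal of $C$ containing the central idempotent $\theta(1_A)$, I would define $\Phi\colon C\to\Hom(H,A)$ by $\Phi(c)(h)=\theta^{-1}\bigl(\theta(1_A)(h\blacktriangleright c)\bigr)$. Using (GL3) for $(C,\blacktriangleright,\theta)$ gives $\Phi\circ\theta=\varphi$; centrality of $\theta(1_A)$ makes $\Phi$ multiplicative; and $\Phi(g\blacktriangleright c)=g\triangleright\Phi(c)$ makes it $H$-linear. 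Since $C=H\blacktriangleright\theta(A)$, the image $\Phi(C)$ equals $H\triangleright\varphi(A)=B$, so $\Phi$ is a surjective morphism of $H$-module algebras onto $B$ with $\Phi\circ\theta=\varphi$. If moreover $C$ is minimal, then $\Phi(c)=0$ forces $\theta(1_A)(h\blacktriangleright c)=0$ for all $h$, i.e.\ $\theta(1_A)\bigl(H\blacktriangleright c\bigr)=0$; applying (GL5) to the submodule $H\blacktriangleright c$ gives $c=0$, so $\Phi$ is an isomorphism. In particular any two minimal enveloping actions are isomorphic, which, together with the construction above, settles (1) and (2).

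\textbf{Morita equivalence (3).} Set $e:=\varphi(1_A)\# 1_H\in B\# H$. Since $\varphi(1_A)$ is a central idempotent of $B$, one checks that $e$ is an idempotent and that $\varphi\otimes\mathrm{id}_H$ restricts to an algebra isomorphism of $\underline{A\# H}$ onto the corner $e(B\# H)e$ (injectivity from injectivity of $\varphi$; that the image is exactly $e(B\# H)e$ uses $\underline{A\# H}=(A\otimes H)(1_A\otimes 1_H)$, the identity $\varphi(a)\ast(h\triangleright\varphi(b))=\varphi(a(h\cdot b))$, and the fact that $\varphi(1_A)$ is a central unit for $\varphi(A)$). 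The key remaining point is that $e$ is a \emph{full} idempotent, i.e.\ $(B\# H)e(B\# H)=B\# H$: because $B=H\triangleright\varphi(A)$, the algebra $B\# H$ is spanned by the elements $(h\triangleright\varphi(a))\# k$, and each of these is pushed into the two-sided ideal generated by $e$ by rewriting through the module-algebra relations together with $\varphi(1_A)\ast(h\triangleright\varphi(a))=\varphi(h\cdot a)$ and $\varphi(a)\ast(h\triangleright\varphi(b))=\varphi(a(h\cdot b))$. Finally, since $e$ is full, the standard Morita context given by the bimodules $e(B\# H)$ and $(B\# H)e$ with the multiplication pairings exhibits $e(B\# H)e$ and $B\# H$ as Morita equivalent; combined with $e(B\# H)e\cong\underline{A\# H}$, this gives (3).

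\textbf{Expected main obstacle.} The two genuinely delicate points are: (i) showing that $B=H\triangleright\varphi(A)$ is closed under the convolution product and that $\varphi(A)$ sits inside it as a central unital ideal --- this is where (PA2), (PA3), the symmetry axiom (PA3') and the antipode of $H$ all enter, and where the symmetry hypothesis is indispensable; and (ii) the fullness of $e$ in $B\# H$, which again rests on the admissibility identity $B=H\triangleright\varphi(A)$. Everything else reduces to bookkeeping in Sweedler notation.
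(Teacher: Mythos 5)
Your proposal is correct and follows essentially the same route as the paper: the standard enveloping action inside the convolution algebra $\operatorname{Hom}_k(H,A)$ with $(h\triangleright f)(l)=f(lh)$, the embedding $\varphi(a)(h)=h\cdot a$, the subalgebra $B=H\triangleright\varphi(A)$, and the same three key identities. The parts the paper defers to the cited reference --- minimality via evaluation at $1_H$, the comparison map $\Phi(c)(h)=\theta^{-1}\bigl(\theta(1_A)(h\blacktriangleright c)\bigr)$, and the Morita equivalence via the full idempotent $e=\varphi(1_A)\#1_H$ with corner $e(B\# H)e\cong\underline{A\# H}$ --- are filled in soundly and consistently with that reference.
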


\begin{proof} The techniques were largely inspired by the proof of Theorem \ref{globthmgroup} of globalization for partial actions of groups. In this note, we only present a sketch of the construction of the enveloping action.

Consider the $k$-algebra $\mbox{Hom}_k (H,A)$ with the convolution product.  The formula
$(h\triangleright f)(k)=f(kh)$.
defines a structure of a left $H$-module algebra on $\mbox{Hom}_k (H,A)$. Now define the linear map 
$\varphi :\ A\rightarrow \mbox{Hom}_k (H,A)$,  $\varphi (a)(h)=h\cdot a$.
It is easy to prove that $\varphi$ is injective and multiplicative. Finally, Let $B$ be the subalgebra of $\mbox{Hom}_k (H,A)$ generated by elements of the form $h\triangleright \varphi (a)$, with $h\in H$ and $a\in A$. This subalgebra is by construction an $H$-submodule algebra and the fact that $\varphi (A)$ is an ideal of $B$ follows from the identities
\[
\varphi (a)(h\triangleright \varphi (b))=\varphi (a(h\cdot b)), \qquad (h\triangleright \varphi (b))\varphi (a)=\varphi ((h\cdot b)a) .
\]
In fact, these identities also prove that $\varphi (1_A)$ is a central idempotent in $B$. Moreover, the algebra $B$ coincides with the subspace $H\triangleright \varphi (A)$, this follows from the identity
\[
(h\triangleright \varphi (a))(k\triangleright \varphi (b))=h_{(1)}\triangleright \varphi (a(S(h_{(2)})k\cdot b)) .
\]
This gives the basic construction of the so called ``standard enveloping action'', the verification of the axioms (GL1-GL5) of Definition \ref{globhopf} are straightforward. The proof of items (2) and (3) can be found in \cite{AB}.
\end{proof}

The Globalization Theorem can be applied in order to prove an analog for partial smash products of he Blattner-Montgomery Theorem \cite{BM},
see \cite{AB3}. This generalizes a result of Lomp in \cite{L}, which consists of a generalization of the Cohen-Montgomery Theorem to partial skew group algebras. In the classical case,  we have an isomorphism,
\[
(A\# H)\# H^\circ \cong A\otimes \mathcal{L} , 
\]
in which $H$ is a residually finite dimensional Hopf algebra, $H^\circ$ is its finite dual Hopf algebra, which is dense in $H^*$, and $\mathcal{L}$ is a dense subring of $\mbox{End}_k (H)$. If $H$ is finite dimensional, we can rewrite this isomorphism as
\[
(A\# H)\# H^* \cong A\otimes \mbox{End}_k (H).
\]
Consider a finite dimensional Hopf algebra $H$ and a left partial $H$-module algebra $A$. We have a map
between the double smash product $(\underline{A\# H})\# H^*$ and the tensor product $A\otimes \mbox{End}_k (H)$,
which is in general not an isomorphism. This map can be obtained using a (not necessarily minimal) enveloping action $(B,\triangleright , \theta)$ such that $B$ is a unital algebra. There is a natural inclusion  
\[
\imath : (\underline{A\# H})\# H^* \rightarrow (B\# H)\# H^* , 
\]
and there is the classical isomorphism 
\[ 
\Phi : (B\# H)\# H^* \rightarrow  B\otimes \mbox{End}_k (H) .
\]
$\Phi \circ \imath$ corestricts to a map
\[
\Phi \circ \imath :(\underline{A\# H})\# H^* \rightarrow A\otimes \mbox{End}_k (H).
\]
which is the restriction of the map $\Phi$ above whose image lies in $A\otimes \mbox{End}_k (H)$. The kernel and image of
this map can be described.

\section{Partial representations and partial modules}\label{5}

As we have seen before, in order to define a partial action of a group $G$ on a set $X$, we have to assign to each element $g\in G$ a bijection $\alpha_g :X_{g^{-1}}\rightarrow X_g$ between two subsets of $X$ such that the compositions of these bijections, wherever they are defined, should be compatible with the group operation. Given the set $X$, one can define the set ${\mathcal I}(X)$ of partially defined bijections on $X$:
\[
\mathcal{I} (X)=\{ f:\mbox{Dom}(f)\subseteq X \rightarrow \mbox{Im}(f)\subseteq X \; | \; f \mbox{  is a bijection } \}.
\]
The composition between two elements $f_1, f_2 \in \mathcal{I} (X)$ is a new element $f_2 \circ f_1 \in \mathcal{I}(X)$ whose domain is 
$\mbox{Dom}(f_2 \circ f_1 )=f_1^{-1}(\mbox{Im}(f_1 )\cap \mbox{Dom}(f_2 ))$ and image $\mbox{Im}(f_2 \circ f_1 )=f_2 (\mbox{Im}(f_1 )\cap \mbox{Dom}(f_2 ))$. Note that if $\mbox{Im}(f_1) \cap \mbox{Dom}(f_2)=\emptyset$, then the composition $f_2\circ f_1$ is defined to be the empty map  $\emptyset :\emptyset \subseteq X \rightarrow \emptyset \subseteq X$. This composition operation endows $\mathcal{I}(X)$ with the structure of an inverse semigroup. 

\begin{defi} \cite{Law} An inverse semigroup is a set $S$ with an associative operation $\cdot :S\times S \rightarrow S$ such that, for each $s\in S$ there exists a unique element $s^* \in S$ satisfying $s\cdot s^* \cdot s =s$ and $s^* \cdot s \cdot s^* =s^*$, this element $s^*$ is called the pseudo-inverse of $s$.
\end{defi}

In fact, $\mathcal{I}(X)$ is an inverse monoid, because the element $\mbox{Id}_X \in \mathcal{I} (X)$ is the neutral element with respect to the composition, moreover, this inverse monoid also has a zero element which is the empty map previously defined. In fact, Wagner-Preston theorem \cite{P,W} states that every inverse semigroup is a semigroup of the form $\mathcal{I}(X)$ for some set $X$. 

Partial group actions can be rephrased in terms of the theory of inverse semigroups \cite{Law}, which provide a more general and more suitable language to treat partial actions. A partial action of a group $G$ on a set $X$ can be viewed as a map $\alpha :G\rightarrow \mathcal{I}(X)$ such that $\mbox{Dom}(\alpha (g))=X_{g^{-1}}$ and $\mbox{Im}(\alpha (g))=X_g$. One can easily verify that the map $\alpha$ satisfy the following identities:
\begin{eqnarray}
\alpha (e) & = & \mbox{Id}_X,  \label{pr1}\\
\alpha (g) \alpha (h) \alpha (h^{-1}) & = & \alpha (gh)\alpha (h^{-1}),\qquad \forall g,h \in G , \label{pr2} \\
\alpha (g^{-1}) \alpha (g) \alpha (h) & = & \alpha (g^{-1}) \alpha (gh),  \qquad \forall g,h \in G , \label{pr3}
\end{eqnarray}
the map $\alpha$ is called a partial representation of $G$. In general, we can define a partial representation of a group $G$ on an inverse semigroup $S$ as a map $\pi :G\rightarrow S$ satisfying the identities (\ref{pr1}), (\ref{pr2}) and (\ref{pr3}). 

Since every group $G$ is in particular an inverse semigroup, one could expect that a partial action of $G$ on a set $X$ (or a partial representation of $G$ on a semigroup $S$) is a morphism between inverse semigroups from $G$ to ${\mathcal I}(X)$ (resp.\ from $G$ to $S$). However, this is not the case and what we find instead is a composition law which works only in the presence of a ``witness'' on the left or on the right. In order to recover a morphism of inverse semigroups associated to a partial representation, there is a universal semigroup constructed upon the group $G$, called Exel's semigroup \cite{ruy2}:
\[
S(G)=\langle [g] \; | \; g\in G, \; \; [e]=1, \; \; [g][h][h^{-1}]=[gh][h^{-1}], \; \; [g^{-1}][g][h]=[g^{-1}][gh] \rangle .
\]
Using the defining relations of $S(G)$ we conclude that 
$[g][g^{-1}][g]=[gg^{-1}][g]=[e][g]=[g]$ proving that
$S(G)$ is indeed an inverse semigroup. The map $[\; ]:G\rightarrow S(G), g\mapsto [g]$ is then a partial representation of $G$ on $S(G)$ and $S(G)$ satisfies the following universal property: given an inverse semigroup $S$ and a partial representation $\pi :G\rightarrow S$, there exists a unique morphisms of semigroups $\widehat{\pi}:S(G)\rightarrow S$ such that $\pi =\widehat{\pi}\circ [\; ]$. 

Kellendonk and Lawson \cite{KL} proved that Exel's inverse semigroup can be characterized as the Birget-Rhodes expansion of the group $G$. Given a group $G$, the Birget-Rhodes expansion \cite{BR1,BR2,Sz} is the inverse semigroup with underlying set given by 
\[
\widetilde{G}^R =\{ (A,g)\in \mathcal{P}(G)\times G \; | \; e\in A , \;\ \; g\in A \} , 
\]
and with the operation
\[
(A,g)(B,h)=(A\cup gB , gh) .
\]
The pseudo-inverse of an element $(A, g)\in \widetilde{G}^R$ is the element $(g^{-1}A, g^{-1})$. 
The isomorphism between $S(G)$ and $\widetilde{G}^R$ is performed by the map $\phi :S(G)\rightarrow \widetilde{G}^R$, defined as 
$\phi ([g])=(\{ e,g \}, g)$. This morphism is created out of a partial representation of $G$ on $\widetilde{G}^R$, using the universal property of $S(G)$. It is easy to see that $\phi$ in injective, and the surjectivity follows from the standard presentation of an element of $S(G)$. We shall return to this point later, when we discuss the linearized version of this semigroup, the partial group algebra.

Let us move further to the linearized case to consider partial representations of a group $G$ on algebras and partial $G$-modules.

\begin{defi} \cite{dok} A partial representation of a group $G$ in an algebra $B$ is a map $\pi :G\rightarrow B$   such that 
\begin{enumerate}
\item $\pi (e) = 1_B$,
\item $\pi (g)\pi (h) \pi (h^{-1}) = \pi (gh)\pi (h^{-1}),$
\item $\pi (g^{-1}) \pi (g) \pi (h) = \pi (g^{-1}) \pi (gh),$
\end{enumerate} 
for all $g, h\in G$.
Let $\pi:G\rightarrow B$ and $\pi':G\rightarrow B'$ be two partial representations of $G$. A morphism of partial partial representations is an
algebra morphism $f:B\rightarrow B'$ such that $\pi ' =f\circ \pi$.
The category of partial representations of $G$, denoted as $\underline{ParRep}_G$, is the category whose objects are pairs $(B,\pi)$, where $B$ is a unital $k$-algebra and $\pi : G\rightarrow B$ is a partial representation of $G$ in $B$, and whose morphisms are morphisms of partial representations.
\end{defi}

It is evident that representations of $G$ are automatically partial representations. A partial representation $\pi:G\to B$ satsfying the condition
$\pi (g) \pi (g^{-1}) =1_B$, for all $g\in G$,  turns out to be a usual representation of the group $G$ on the algebra $B$. As classical representations of a group $G$ are directly related with $G$-modules, one can use the concept of a partial representation of $G$ to define partial $G$-modules. A partial $G$-module is a pair $(X,\pi)$ where $X$ is a $k$-module and $\pi:G\to \End_k(X)$ is a partial representation of $G$. A morphism of partial $G$ modules $f:(X,\pi)\rightarrow (Y,\pi')$ is a $k$-linear map  $f:X\to Y$  satisfying $\pi'(g)\circ f=f\circ \pi(g)$ for all $g\in G$. We will denote by $\underline{ParMod}_G$ the category of partial $G$-modules.

\begin{exmp} Given a unital partial action $(\{ A_g \}_{g\in G} , \{ \alpha_g \}_{g\in G} )$ of $G$ on a unital algebra $A$, we can define partial representations
$$\pi :\ G \to \mbox{End}_k (A),~~\pi (g)(a) =\alpha_g (a1_{g^{-1}}),$$
and
$$\psi:\ G\to A\rtimes_{\alpha} G,~~\psi(g)=1_g \delta_g.$$
\end{exmp}

\begin{exmp} A partial $G$-module $(X,\pi )$ induces a partial action of $G$ on the $k$-module $X$. Since $\pi$ is a partial representation, the operators $P_g =\pi (g) \pi (g^{-1})$ are projections. Take the domains $X_g =P_g (X)$ and the partially defined morphisms $\alpha_g =\pi (g)|_{{}_{X_{g^{-1}}}} :\ X_{g^{-1}}\rightarrow X_g $. These data define a partial action of $G$ on $X$.

Conversely, consider a partial action of $G$ on a $k$-module $X$, and assume that there exists a projection $P_g :X\rightarrow X_g$ such that $P_g (X)=X_g$, and that the maps $\alpha_g:X_{g^{-1}}\to X_g$ are $k$-linear. Then we have a partial representation $\pi :G\rightarrow \mbox{End}_k (X)$, $\pi (g) (x)=\alpha_g (P_{g^{-1}}(x))$.
\end{exmp}

A well-nown result in classical representation theory states that representations $\rho$ of a group $G$ on a $k$-module $M$ correspond
to algebra morphisms from the group algebra $kG$ to the algebra $\mbox{End}_k (M)$. In order to obtain a similar result for
partial representations, we define the partial group algebra $k_{par}G$ as the unital algebra generated by symbols $[g]$, with $g\in G$, satisfying the relations
\[
[e]=1 \; , \quad [g][h][h^{-1}]=[gh][h^{-1}]\; , \quad [g^{-1}][g][h]=[g^{-1}][gh],\]
for all $ g,h\in G$.
It is easy to see that the map $[ \underline{\, }] :\ G\to k_{par}G$ is a partial representation of $G$ on the algebra $k_{par}G$. This algebra can be easily recognized as the algebra $kS(G)$, which is the linearized version of Exel's inverse semigroup $S(G)$. The partial group algebra factorizes partial representations of the group $G$ by algebra morphisms as stated in the following proposition.

\begin{prop} \cite{dok0} For each partial representation $\pi :G \rightarrow B$ there exists a unique algebra morphism $\hat{\pi} :k_{par}G\rightarrow B$, such that $\pi =\hat{\pi}\circ [\underline{\, }]$. Conversely, given any algebra morphism 
$\phi :k_{par}G\rightarrow B$, one can define a partial representation $\pi_{\phi} :G\rightarrow B$ such that $\phi =\hat{\pi}_{\phi}$. 
\end{prop}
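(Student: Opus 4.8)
The plan is to recognize $k_{par}G$ as an algebra presented by generators and relations and then to invoke the universal property that comes with such a presentation. Concretely, I would write $k\langle G\rangle$ for the free unital associative $k$-algebra on the set of symbols $\{X_g : g\in G\}$ indexed by the group, and let $I \trianglelefteq k\langle G\rangle$ be the two-sided ideal generated by the elements
\[
X_e - 1,\qquad X_gX_hX_{h^{-1}} - X_{gh}X_{h^{-1}},\qquad X_{g^{-1}}X_gX_h - X_{g^{-1}}X_{gh},
\]
ranging over all $g,h\in G$. By construction $k_{par}G = k\langle G\rangle / I$, and $[g]$ is the image of $X_g$; the map $[\underline{\, }]:G\to k_{par}G$ is a partial representation precisely because the three displayed families of relators lie in $I$.

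For the first assertion I would start from a partial representation $\pi:G\to B$. The universal property of the free algebra yields a unique unital algebra morphism $\tilde\pi: k\langle G\rangle\to B$ with $\tilde\pi(X_g)=\pi(g)$ for all $g$. Axioms (1), (2) and (3) in the definition of a partial representation say exactly that $\tilde\pi$ sends each of the three families of generators of $I$ to zero, so $\tilde\pi(I)=0$ and $\tilde\pi$ factors through a unital algebra morphism $\hat\pi: k_{par}G\to B$ with $\hat\pi([g])=\pi(g)$, i.e. $\pi=\hat\pi\circ[\underline{\, }]$. Uniqueness is immediate: the elements $[g]$ generate $k_{par}G$ as an algebra, so any algebra morphism out of $k_{par}G$ is determined by its values on them, and the requirement $\pi=\psi\circ[\underline{\, }]$ pins those values down to $\psi([g])=\pi(g)$.

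For the converse, given an algebra morphism $\phi: k_{par}G\to B$ I would set $\pi_\phi := \phi\circ[\underline{\, }]$, that is $\pi_\phi(g)=\phi([g])$. Since $[\underline{\, }]$ is itself a partial representation of $G$ in $k_{par}G$ — the defining relations of $k_{par}G$ are nothing but axioms (1)–(3) written for the symbols $[g]$ — and $\phi$ is a unital algebra homomorphism, applying $\phi$ to those relations shows that the $\pi_\phi(g)$ satisfy (1)–(3); hence $\pi_\phi$ is a partial representation. Finally, $\hat{\pi}_\phi$ and $\phi$ are both algebra morphisms $k_{par}G\to B$ which agree after precomposition with $[\underline{\, }]$ (both equal $\pi_\phi$ on the generators $[g]$), so by the uniqueness just established $\hat{\pi}_\phi = \phi$.

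I do not expect a genuine obstacle here; the proof is a formal generators-and-relations argument, and the only point demanding care is the bookkeeping, namely checking relator by relator that the three partial-representation axioms match the three defining relations of $k_{par}G$, and tracking unitality (the relation $[e]=1$ is exactly what makes $\hat\pi$ unital and what forces $\pi_\phi(e)=1_B$). Alternatively, one could route the whole statement through the identification $k_{par}G\cong kS(G)$ recalled above, combining the universal property of the semigroup algebra with the universal property of Exel's semigroup $S(G)$; but the direct presentation argument is shorter. Either way, the content of the proposition is that the pair $\bigl(k_{par}G,[\underline{\, }]\bigr)$ corepresents the functor sending a unital $k$-algebra $B$ to the set of partial representations $G\to B$, which is precisely what the two asserted correspondences express.
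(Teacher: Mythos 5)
Your argument is correct and complete: presenting $k_{par}G$ as $k\langle G\rangle/I$ and matching the three relators against the three axioms of a partial representation is exactly the standard universal-property argument, and your handling of uniqueness and of the converse direction is sound. Note that the paper itself offers no proof of this proposition (it is quoted from the reference [dok0] in a survey context), so there is no alternative argument to compare against; your generators-and-relations route is the one the original source uses.
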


The most remarkable properties of the partial group algebra $k_{par}G$ come from its rich internal structure. First, let us highlight an important commutative subalgebra of the partial group algebra. For each $g\in G$ define the elements $\varepsilon_g =[g][g^{-1}]\in k_{par}G$. 
For all $g,h \in G$, we have the following identities
$$[g]\varepsilon_h= \varepsilon_{gh}[g]~~;~~\varepsilon_g  = \varepsilon_g \varepsilon_g~~;~~\varepsilon_g \varepsilon_h = \varepsilon_h \varepsilon_g.$$
Let $A$ be the subalgebra of $Ak_{par}G$ generated by all elements of the form $\varepsilon_g$, with $g\in G$. $A$ is commutative and it is generated by central idempotents. There is a very natural unital partial action of $G$ on $A$ given by the domains $A_g =\varepsilon_g A$ and the partially defined isomorphisms $\alpha_g :A_{g^{-1}}\rightarrow A_g$ defined as $\alpha_g (a)=[g]a[g^{-1}]$, in which $a=\varepsilon_{g^{-1}}a' \in A_{g^{-1}}$, more explicitly
\[
\alpha_g (\varepsilon_{g^{-1}}\varepsilon_{h_1}\ldots \varepsilon_{h_n})=\varepsilon_{gh_1}\ldots \varepsilon_{gh_n} \varepsilon_g .
\]

\begin{thm} \cite{dok} $G$ acts partially on the commutative algebra $A\subseteq k_{par}G$, and$k_{par}G \cong A\rtimes_{\alpha}G$.
\end{thm}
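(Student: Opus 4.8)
The plan is to build two mutually inverse unital algebra homomorphisms between $k_{par}G$ and $A\rtimes_\alpha G$; once the partial action on $A$ is checked, the construction of these maps is almost forced.

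\emph{Step 1 (the partial action on $A$).} First I would verify that the data described just above really is a unital partial action, using only the three displayed identities $[g]\varepsilon_h=\varepsilon_{gh}[g]$, $\varepsilon_g^2=\varepsilon_g$, $\varepsilon_g\varepsilon_h=\varepsilon_h\varepsilon_g$ together with $[g][g^{-1}][g]=[g]$ (all already available). Thus: $A_g=\varepsilon_g A$ is a unital ideal of $A$ with local unit $\varepsilon_g$, and $\varepsilon_e=[e][e]=1$ gives $A_e=A$; the map $\alpha_g(a)=[g]a[g^{-1}]$ carries $A_{g^{-1}}$ into $A_g$ because $\varepsilon_{g^{-1}}[g^{-1}]=[g^{-1}]$ and $[g]\varepsilon_{g^{-1}}=[g]$ force $[g]a[g^{-1}]\in\varepsilon_gA$; and since $[g^{-1}][g]=\varepsilon_{g^{-1}}$ acts as the identity on $A_{g^{-1}}$, the map $\alpha_g$ is a unital algebra isomorphism with inverse $\alpha_{g^{-1}}$. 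The composition axioms (the analogues of (ii) and (iii) in the definition of a partial action) reduce, after conjugating, to the single identity $[g][h]=\varepsilon_g[gh]$, which I would obtain by left-multiplying the relation $[g^{-1}][g][h]=[g^{-1}][gh]$ by $[g]$ and using $[g][g^{-1}][g]=[g]$.

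\emph{Step 2 (the map $\Psi$).} Next I would define $\Psi:A\rtimes_\alpha G\to k_{par}G$ on the summand $A_g\delta_g$ by $\Psi(a\delta_g)=a[g]$, extended linearly; this is well defined since $A\rtimes_\alpha G=\bigoplus_{g}A_g\delta_g$, and it is unital because $\Psi(1_A\delta_e)=1_A=1$. The point to check is multiplicativity: from the (unital) crossed-product formula one has $\Psi((a\delta_g)(b\delta_h))=a\,\alpha_g(b\varepsilon_{g^{-1}})[gh]=a[g]b[g^{-1}][gh]$, and then $[g^{-1}][gh]=[g^{-1}][g][h]=\varepsilon_{g^{-1}}[h]$ together with $b\varepsilon_{g^{-1}}=\varepsilon_{g^{-1}}b$ and $[g]\varepsilon_{g^{-1}}=[g]$ rewrite this as $a[g]b[h]=\Psi(a\delta_g)\Psi(b\delta_h)$.

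\emph{Step 3 (inverse and conclusion).} Since $[g]=\varepsilon_g[g]=\Psi(\varepsilon_g\delta_g)$ and the $[g]$ generate $k_{par}G$, $\Psi$ is surjective. For injectivity I would avoid a normal-form argument and instead invoke the universal property of $k_{par}G$ recalled above: the assignment $g\mapsto\varepsilon_g\delta_g=1_g\delta_g$ is a partial representation of $G$ in $A\rtimes_\alpha G$ (exactly the partial representation $\psi$ from the earlier example), so there is a unique algebra map $\Phi:k_{par}G\to A\rtimes_\alpha G$ with $\Phi([g])=\varepsilon_g\delta_g$. Computing $\Phi(\varepsilon_g)=(\varepsilon_g\delta_g)(\varepsilon_{g^{-1}}\delta_{g^{-1}})=\varepsilon_g\delta_e$ shows that $\Phi$ restricts on $A$ to the canonical embedding $a\mapsto a\delta_e$; hence $\Phi(\Psi(a\delta_g))=(a\delta_e)(\varepsilon_g\delta_g)=a\delta_g$ and $\Psi(\Phi([g]))=\Psi(\varepsilon_g\delta_g)=[g]$, so $\Phi$ and $\Psi$ are mutually inverse and $k_{par}G\cong A\rtimes_\alpha G$.

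\emph{Where the work is.} No step is conceptually hard; the whole argument is a disciplined use of the defining relations. The genuinely load-bearing identity is $[g][h]=\varepsilon_g[gh]$ (equivalently $[g^{-1}][gh]=\varepsilon_{g^{-1}}[h]$), which powers both the partial-action axioms and the multiplicativity of $\Psi$, and one must keep careful track of which local unit $1_{g^{-1}}=\varepsilon_{g^{-1}}$ enters the crossed-product product. If one insisted on proving $\Psi$ injective by hand, the real difficulty would be a spanning/independence analysis of $k_{par}G$; routing injectivity through its universal property is what keeps the proof short.
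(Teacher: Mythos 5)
Your proposal is correct, and it follows essentially the same strategy that the paper itself uses (in sketch form) for the Hopf-algebraic generalization in Theorem \ref{Hparisosmash}: build $\Phi$ out of the partial representation $g\mapsto 1_g\delta_g$ via the universal property of $k_{par}G$, build $\Psi$ by $a\delta_g\mapsto a[g]$, and check they are mutually inverse on generators. The paper states the group-case theorem without proof (citing \cite{dok}), so the only remark worth making is that your identification of $[g][h]=\varepsilon_g[gh]$ (and its mirror $[g][h]=[gh]\varepsilon_{h^{-1}}$) as the load-bearing identity, and your use of the universal property to sidestep a normal-form argument for injectivity, match the standard treatment exactly.
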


One of the consequences of this theorem is that a typical element $x\in k_{par}G$ can be written as a linear combination of monomials of the form $\varepsilon_{h_1}\ldots \varepsilon_{h_n}[g]$.

For a finite group $G$, the partial group algebra $k_{par}G$ has another characterization as a groupoid algebra. Consider the set
\[
\Gamma (G) =\{ (g,A)\in \mathcal{P}_0(G) \times G \, | \, g^{-1}\in A \} ,
\]
where $\mathcal{P}_0(G)$ denotes the subset of $\mathcal{P}(G)$ of the subsets of $G$ containing the neutral element $e\in G$. This set can be endowed with a structure of a groupoid with the operation in $\Gamma(G)$ defined as
$$(g,A)(h,B) =\begin{cases}
(gh, B)&{\rm if~}A=hB \\
\underline{\qquad} & \mbox{ otherwise.}
\end{cases}$$
The source and target map are given by the formulas $s(g,A)=(e,A)$ and $t(g,A)=(e,gA)$, and the inverse of $(g,A)$ is
$(g,A)^{-1}=(g^{-1}, gA)$. The partial group algebra $k_{par}G$ and the groupoid algebra $k\Gamma (G)$ are isomorphic \cite{dok0}. The isomorphism is constructed upon a partial representation of $G$, $\lambda_p :G\rightarrow k\Gamma (G)$, given by
\[
\lambda_p (g) =\sum_{A\ni g^{-1}} (g,A), 
\]
and using the universal property of $k_{par}G$. If $|G|=n$ then the dimension of $k_{par}G$ is $d=2^{n-2}(n+1)$, and moreover, this algebra is a direct sum of matrix algebras, see Theorem \ref{matrix}. Indeed, the connected components of the graph of the groupoid $\Gamma (G)$ reveils the richness of the internal structure of the algebra $k_{par}G\cong k\Gamma (G)$.

\begin{thm} \label{matrix}
\cite{dok0} The groupoid algebra $k\Gamma (G)$ has the direct sum decomposition
\[
k\Gamma (G) =\bigoplus_{\begin{array}{c}H\leq G\\ 1\leq m \leq (G:H) \end{array}} c_m (H) M_m (kH) .
\]
running over the subgroups $H\leq G$ and integers $m$ between $1$ and $(G:H)$. The multiplicities are given by
the recursive formula
$$c_m (H)=\frac{1}{m} \left({(G:H)-1 \choose m-1}-m \sum_{\begin{array}{c} H<B\leq G  \\  (B:H)|m \end{array}} \frac{c_{\frac{m}{(B:H)}} (B)}{(B:H)}\right).$$
\end{thm}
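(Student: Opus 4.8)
The plan is to derive the decomposition from the general block structure of a finite groupoid algebra, and then to carry out a combinatorial analysis of the connected components of $\Gamma(G)$. First I would recall two standard facts. For any finite groupoid $\mathcal{G}$, the multiplication rule on $k\mathcal{G}$ annihilates the product of two basis morphisms lying in different connected components, while $\Gamma(G)^{(0)}$ is the disjoint union of the object sets of the components; hence $k\mathcal{G}\cong\bigoplus_{c}k\mathcal{G}_c$ as algebras, the sum running over the connected components $c$, where $\mathcal{G}_c$ is the full subgroupoid on the objects of $c$. Second, if $\mathcal{C}$ is a \emph{connected} groupoid with objects $x_1,\dots,x_m$ and isotropy group $H=\mathcal{C}(x_1,x_1)$, then after choosing arrows $\gamma_i\colon x_1\to x_i$ with $\gamma_1=\mathrm{id}_{x_1}$, every arrow $x_i\to x_j$ is uniquely of the form $\gamma_j h\gamma_i^{-1}$ with $h\in H$; sending it to the matrix $hE_{ji}$ yields an isomorphism $k\mathcal{C}\cong M_m(kH)$, whose isomorphism type does not depend on the choices since isotropy groups at different objects of $\mathcal{C}$ are conjugate. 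Combining these, $k\mathcal{G}\cong\bigoplus_{c}M_{m_c}(kH_c)$, and the whole task reduces to listing the components of $\Gamma(G)$ together with their number of objects and their isotropy groups.

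For that I would identify the object set of $\Gamma(G)$ with $\mathcal{P}_0(G)$, the set of subsets of $G$ containing $e$, via $(e,A)\leftrightarrow A$. The arrow $(g,A)$ runs from $(e,A)$ to $(e,gA)$ and exists precisely when $g^{-1}\in A$, which is the same as $e\in gA$; so the arrows issuing from $A$ hit exactly those translates $gA$ that again contain $e$, and two objects $A,A'$ lie in one component iff $A'=gA$ for some $g\in G$. Thus the connected components of $\Gamma(G)$ are exactly the orbits of $G$ acting by left translation on $\mathcal{P}_0(G)$ (equivalently, on the nonempty subsets of $G$), the isotropy group at $A$ is $\mbox{Stab}_G(A)=\{g\in G\mid gA=A\}$, which is a subgroup that is automatically contained in $A$ (as $e\in A$) and for which $A$ is a union of right cosets. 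Writing $H=\mbox{Stab}_G(A)$, a short coset computation identifies the translates $gA\ni e$ with the right cosets of $H$ contained in $A$, so the component of $A$ has exactly $m:=|A|/|H|$ objects, and $1\le m\le(G:H)$.

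Putting the two steps together, $k\Gamma(G)\cong\bigoplus M_m(kH)$, the sum over components, with $H$ the isotropy group at a chosen object of the component and $m$ the number of its objects. To organize this as in the statement I would define $c_m(H)$ to be $\tfrac1m$ times the number of $A\in\mathcal{P}_0(G)$ with $\mbox{Stab}_G(A)=H$ \emph{exactly} and $|A|=m|H|$. Then, using that the stabilizers of the distinct objects of a single component run through one conjugacy class of subgroups (so these stabilizers account for all $m$ objects of the component) together with $M_m(kH)\cong M_m(kH')$ for $H\cong H'$, one checks that $\bigoplus_{H\le G,\ 1\le m\le(G:H)}c_m(H)\,M_m(kH)$ reproduces the block decomposition above.

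Finally I would extract the recursion by a double count. Fix $H\le G$ and consider the sets $A\in\mathcal{P}_0(G)$ that are left $H$-invariant with $|A|=m|H|$: such an $A$ is a union of $m$ right cosets of $H$, one of which must be $H$ itself, so there are $\binom{(G:H)-1}{m-1}$ of them. Partition this family according to the exact stabilizer $B\supseteq H$ of $A$; then $(B:H)\mid m$, one has $|A|=(m/(B:H))\,|B|$, and the number of $A$ with $\mbox{Stab}_G(A)=B$ equals $(m/(B:H))\,c_{m/(B:H)}(B)$ by the definition of the $c$'s. Isolating the term $B=H$ yields $m\,c_m(H)=\binom{(G:H)-1}{m-1}-m\sum_{H<B\le G,\ (B:H)\mid m}\frac{c_{m/(B:H)}(B)}{(B:H)}$, which is exactly the asserted formula (and it already contains its own base values $c_1(G)=1$, $c_m(G)=0$ for $m>1$). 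The routine parts are the coset bookkeeping and checking that the general groupoid-algebra decomposition applies; the delicate point is the conjugacy accounting in the third step, where one must be sure that indexing the blocks by a literal subgroup $H$, rather than by a conjugacy class of subgroups, is consistent both with the chosen definition of $c_m(H)$ and with the recursion, since the isotropy groups of distinct objects in a single component are only conjugate and not equal.
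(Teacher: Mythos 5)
The survey itself offers no proof of this theorem --- it is quoted from \cite{dok0} --- so your argument has to stand on its own. Your strategy is the natural one and surely close to the original: split $k\Gamma(G)$ over connected components, identify the component of an object $A$ with $M_m(k\,\mbox{Stab}_G(A))$ where $m=|A|/|\mbox{Stab}_G(A)|$, identify components with left-translation orbits of nonempty subsets of $G$, and obtain the recursion by sorting the $H$-invariant sets $A\ni e$ of size $m|H|$ according to their exact stabilizer. The coset bookkeeping, the formula $m=|A|/|H|$, and the double count yielding the recursion are all correct.

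The gap sits exactly at the point you flag as delicate, and it is not merely delicate --- it fails. Your definition $c_m(H)=\frac{1}{m}\#\{A\ni e:\ \mbox{Stab}_G(A)=H,\ |A|=m|H|\}$ is forced by the recursion (which determines the $c_m(H)$ uniquely by downward induction from $H=G$), but this quantity need not be an integer. Take $G=A_4$ and $H$ a Sylow $3$-subgroup, so $(G:H)=4$, $N_G(H)=H$, and $H$ has four conjugates. There is no $B$ with $H<B\leq G$ and $(B:H)\mid 2$ (that would require $|B|=6$), so the recursion gives $c_2(H)=\frac{1}{2}\binom{3}{1}=\frac{3}{2}$. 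Concretely, the three sets $H\cup Hx\ni e$ of size $6$ all have stabilizer exactly $H$, and each lies in a component with two objects whose stabilizers are two \emph{distinct} conjugates of $H$; in total there are six such components, contributing $6\,M_2(kC_3)$, while the formula assigns the non-integral multiplicity $\tfrac{3}{2}$ to each of the four conjugates. So the step ``one checks that $\bigoplus_{H,m}c_m(H)M_m(kH)$ reproduces the block decomposition'' cannot be carried out: only the sum $\sum_{H'\in[H]}c_m(H')$ (the number of components, which is an integer) matches a genuine multiplicity, not the individual $c_m(H)$. Your proof thus establishes the recursion and the dimension count but not the asserted direct-sum decomposition subgroup-by-subgroup; to repair it (and the statement as transcribed here) one must index the blocks by conjugacy classes of subgroups, with multiplicity $N_{m,[H]}=(G:N_G(H))\,c_m(H)$, and restate the recursion accordingly. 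You correctly identified where the danger lies, but the assertion that the accounting works out for literal subgroups is false for non-normal $H$.
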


In particular, if $k$ is a field and ${\rm char}(k)\nmid  |G|$, then $k\Gamma (G)$ is semisimple, and the matrix decomposition in
the Theorem  is a Wedderburn-Artin decomposition.

A theory of partial representations of Hopf algebra has been developed in \cite{ABV}. 

\begin{defi} \cite{ABV} \label{partialrep}
Let $H$ be a Hopf $k$-algebra, and let $B$ be a unital $k$-algebra. A \emph{partial representation} of $H$ in $B$ is a linear map $\pi: H \rightarrow B$ such that 
\begin{enumerate}[{(PR1)}]
\item $\pi (1_H)  =  1_B$, \label{partialrep1}
\item $\pi (h) \pi (k_{(1)}) \pi (S(k_{(2)}))  =   \pi (hk_{(1)}) \pi (S(k_{(2)})) $,
\label{partialrep2}
\item $\pi (h_{(1)}) \pi (S(h_{(2)})) \pi (k)  =   \pi (h_{(1)}) \pi (S(h_{(2)})k)$, \label{partialrep3} 
\item $\pi (h) \pi (S(k_{(1)})) \pi (k_{(2)}) = \pi (hS(k_{(1)})) \pi (k_{(2)})$, \label{partialrep4}
\item $\pi (S(h_{(1)}))\pi (h_{(2)}) \pi (k) = \pi (S(h_{(1)}))\pi (h_{(2)} k)$. \label{partialrep5}
\end{enumerate}
A morphism between two partial representations $(B,\pi)$ and $(B',\pi')$ of $H$ is
an algebra morphism $f:B\rightarrow B'$ such that $\pi ' =f\circ \pi$. $\underline{ParRep}_H$
is the category of partial representations.
\end{defi}

Evidently, every representation of $H$ is a partial representation. Conversely, every partial representation $\pi:H\to B$ satisfying one of the conditions,
\[
\pi (h_{(1)})\pi (S(h_{(2)}))=\epsilon (h) 1_B, \qquad \pi (S(h_{(1)})) \pi (h_{(1)}) =\epsilon (h) 1_B,
\]
for all $h\in H$,
is a representation of $H$. If the antipode is invertible, then 
(PR3) and (PR4) can be rewritten as
\begin{enumerate}
\item[(PR3)] $\pi (\anti (h_{(2)})) \pi (h_{(1)}) \pi (k)  =   \pi (\anti (h_{(2)})) \pi (h_{(1)}k)$,
\item[(PR4)] $\pi (h) \pi (k_{(2)}) \pi (\anti (k_{(1)})) = \pi (hk_{(2)}) \pi (\anti (k_{(1)}))$.
\end{enumerate}
If $H$ is commutative or cocommutative, then $S^{-1}=S$ and (PR3) and (PR4) folllow from (PR1), (PR2) and (PR5). 
In particular, if $H=kG$ is a group algebra, partial representations of $kG$ coincide with partial representaions of the group $G$, as defined before. As in the case of groups, we consider a variant on the category of partial representations, the category of partial $H$-modules
${}_H \mathcal{M}^{par}$. Objects in this category are pairs $(X,\pi)$, where $X$ is a $k$-module and $\pi:H\to \End_k(X)$ is a partial representation. A morphism $f:(X,\pi)\to (Y,\pi')$ is a $k$-linear map $f:X\to Y$ satisfying $f\circ \pi(h)=\pi'(h)\circ f$ for all $h\in H$. 

As paradigmatic examples of partial representations of a Hopf algebra, we have:

\begin{exmp} \cite{ABV}
Let $\cdot :H\otimes A \rightarrow A$ be a partial action of a Hopf algebra $H$ on a unital $k$-algebra $A$. We have partial representations $\pi :\ H\rightarrow \mbox{End}_k (A)$, $\pi (h) (a)=h\cdot a$ and
$\psi:\ H\rightarrow \underline{A\# H}$, $\psi (h)=1_A \# h =(h_{(1)}\cdot 1_A )\otimes h_{(2)} $.
\end{exmp}

As in the group case, there is a universal algebra factorizing partial representations by algebra morphisms.

\begin{defi}\label{definitionhpar} \cite{ABV} Let $H$ be a Hopf algebra 
and let $T(H)$ be the tensor algebra of the vector space $H$. The {\em partial Hopf algebra} $H_{par}$ is the quotient
of $T(H)$ by the ideal $I$ generated by elements of the form 
\begin{enumerate}
\item $1_H - 1_{T(H)}$; 
\item $h \otimes k_{(1)} \otimes S(k_{(2)}) - hk_{(1)} \otimes S(k_{(2)})$;
\item $h_{(1)} \otimes S(h_{(2)}) \otimes k - h_{(1)} \otimes S(h_{(2)})k$;
\item $h \otimes S(k_{(1)}) \otimes k_{(2)} - hS(k_{(1)}) \otimes k_{(2)}$;
\item $S(h_{(1)}) \otimes h_{(2)} \otimes k - S(h_{(1)}) \otimes h_{(2)}k$.
\end{enumerate}
\end{defi}

Let $[h]$ be the class in $H_{par}$ represented by $h\in H$ in $H_{par}$ by $[h]$. The map 
$[\underline{\; }]:\ H \to H_{par}$ has the following properits
\begin{enumerate}
\item $[\alpha h + \beta k] = \alpha [h]+ \beta [k]$;
\item $[1_H] = 1_{H_{par}}$; 
\item $[h][k_{(1)}][S(k_{(2)})] = [hk_{(1)}][S(k_{(2)})]$;
\item $[h_{(1)}][S(h_{(2)})][k] = [h_{(1)}][S(h_{(2)})k]$;
\item $[h][S(k_{(1)})][k_{(2)}] = [hS(k_{(1)})][k_{(2)}]$;
\item $[S(h_{(1)})][h_{(2)}][k] = [S(h_{(1)})][h_{(2)}k]$,
\end{enumerate}
for all $\alpha, \beta \in k$ and $h,k \in H$. $[\underline{\;}]$ is a partial representation of the Hopf algebra $H$ on $H_{par}$.
The partial Hopf algebra $H_{par}$ has the following universal property.

\begin{thm}\label{univHpar} \cite{ABV}
For every partial representation $\pi: H \rightarrow B$ there is a unique morphism of algebras $\hat{\pi}: H_{par}
\rightarrow B$ such that 
$\pi = \hat{\pi} \circ  [ \underline{\; }  ]$. Conversely, given an algebra morphism $\phi : H_{par} \rightarrow B$,
there exists a unique partial representation $\pi_{\phi} :H\rightarrow B$ such that $\phi =\hat \pi_{\phi}$.
\end{thm}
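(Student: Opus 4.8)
The plan is to combine the universal property of the tensor algebra $T(H)$ with the observation that the five families of elements generating the defining ideal $I$ of Definition~\ref{definitionhpar} encode exactly the relations (PR1)--(PR5). For the existence part of the forward direction, given a partial representation $\pi:H\to B$ I would first regard $\pi$ merely as a $k$-linear map out of the vector space $H$; by the universal property of the tensor algebra it extends uniquely to an algebra homomorphism $\tilde\pi:T(H)\to B$ with $\tilde\pi|_H=\pi$. Since $\tilde\pi$ is multiplicative and $k$-linear, evaluating it on the listed generators of $I$ produces precisely the differences appearing in (PR1)--(PR5): $\tilde\pi(1_H-1_{T(H)})=\pi(1_H)-1_B=0$ by (PR1); $\tilde\pi$ of $h\otimes k_{(1)}\otimes S(k_{(2)})-hk_{(1)}\otimes S(k_{(2)})$ equals $\pi(h)\pi(k_{(1)})\pi(S(k_{(2)}))-\pi(hk_{(1)})\pi(S(k_{(2)}))=0$ by (PR2); and the families (3), (4), (5) of $I$ are annihilated by (PR3), (PR4), (PR5) respectively. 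As $\tilde\pi$ is an algebra morphism, vanishing on generators of the two-sided ideal $I$ gives $\tilde\pi(I)=0$, so $\tilde\pi$ factors through the quotient as an algebra morphism $\hat\pi:H_{par}\to B$ with $\hat\pi\circ[\underline{\;}]=\pi$. Uniqueness is clear: $H_{par}$ is generated as an algebra by the $[h]$, $h\in H$, so any algebra morphism out of it is pinned down by its values on them.

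For the converse, given an algebra morphism $\phi:H_{par}\to B$ I would put $\pi_\phi:=\phi\circ[\underline{\;}]:H\to B$, which is $k$-linear. Applying $\phi$ to the six identities satisfied by $[\underline{\;}]$ recorded just before the theorem yields (PR1)--(PR5) for $\pi_\phi$: for instance $\pi_\phi(1_H)=\phi([1_H])=\phi(1_{H_{par}})=1_B$, and $\pi_\phi(h)\pi_\phi(k_{(1)})\pi_\phi(S(k_{(2)}))=\phi\!\left([h][k_{(1)}][S(k_{(2)})]\right)=\phi\!\left([hk_{(1)}][S(k_{(2)})]\right)=\pi_\phi(hk_{(1)})\pi_\phi(S(k_{(2)}))$, and similarly for the remaining three. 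Then $\hat\pi_\phi$ and $\phi$ are two algebra morphisms $H_{par}\to B$ agreeing on every generator $[h]$, whence $\phi=\hat\pi_\phi$; and if $\rho$ is any partial representation with $\hat\rho=\phi$ then $\rho=\hat\rho\circ[\underline{\;}]=\phi\circ[\underline{\;}]=\pi_\phi$, which gives the uniqueness of $\pi_\phi$.

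There is no genuine obstacle here: the argument is the textbook pattern of presenting a quotient of a free algebra by exactly the relations one wants and then reading the universal property off the generators. The only place that needs a little care is the Sweedler-notation bookkeeping when checking that $\tilde\pi$ annihilates the generators coming from families (2)--(5) of $I$: one must first use that $\tilde\pi$, being an algebra homomorphism, distributes over the implicit sums hidden in $k_{(1)}\otimes S(k_{(2)})$ and its companions, so that (PR2)--(PR5) can then be invoked term by term. Once this is spelled out, every remaining verification is routine.
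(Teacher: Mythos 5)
Your proof is correct and is exactly the canonical argument one expects here: extend $\pi$ to $T(H)$ by the universal property of the tensor algebra, observe that (PR1)--(PR5) annihilate the five families of generators of $I$, factor through the quotient, and get the converse by composing an algebra map with the partial representation $[\underline{\;}]$. The survey itself omits the proof of Theorem~\ref{univHpar}, deferring to the cited reference, and your argument coincides with the standard one given there, so nothing further is needed.
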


The identity $\mbox{Id}_H :H\rightarrow H$ is a partial representation of $H$ on $H$ and factorizes through $H_{par}$, generating an algebra morphism $\partial :H_{par}\rightarrow H$, given by $\partial ([h^1]\ldots [h^n])=h^1 \ldots h^n$. In particular $H$ is a direct summand of $H_{par}$ as a $k$-module. 

An immediate consequence of Theorem \ref{univHpar} is that any partial $H$-module $(X,\pi)$, is an $H_{par}$-module. Indeed, the partial representation $\pi :H\rightarrow \mbox{End}_k (X)$ factorizes through a morphism of algebras $\hat{\pi} :H_{par} \rightarrow \mbox{End}_k (X)$, which makes $X$ into an $H_{par}$-module. Therefore, the category ${}_H \mathcal{M}^{par}$ of partial $H$-modules is isomorphic to the category ${}_{H_{par}}\mathcal{M}$ of $H_{par}$-modules.

\begin{exmp} For $H=kG$, the universal algebra $H_{par}$ is the partial group algebra $k_{par}G$, introduced above.
\end{exmp}

\begin{exmp} Let $H=\mathcal{U}(\mathfrak{g})$ be the universal enveloping algebra of a Lie algebra $\mathfrak{g}$.
Then every partial representation of $H$ is a morphism of algebras. Indeed, for a
partial representation $\pi :\mathcal{U}(\mathfrak{g})\rightarrow B$, we can prove that $\pi (h) \pi (k) =\pi (hk)$,
for all 
$h,k\in \mathcal{U}(\mathfrak{g})$. This can be done by induction on the length of the monomials, $k=X_1 \ldots X_n \in \mathcal{U}(\mathfrak{g})$, with $X_i \in \mathfrak{g}$, for $i=1,\ldots ,n$ \cite{ABV}. 
\end{exmp}

\begin{exmp} \label{kc2} \cite{ABV} Let $k$ be a field with characteristic different from 2.
Then $H=(k C_2)^*\cong kC_2$. Let $\{p_e,p_g\}$ be the basis of $H$ consisting of the projections onto $ke$ and $kg$.
Then $1=p_e + p_g$ and $p_e p_g=0$. 
Let $x = [p_e]$, $y = [p_g]$ be the corresponding generators of $H_{par}$. The first defining equation for $H_{par}$
yields  $y = 1-x$, and, in particular, we conclude that $xy=yx$ in $H_{par}$. 

Since $H$  is cocommutative the next five families of equations that define $H_{par}$ are reduced to the first three of
them. Writing the equations explicitly with respect to the basis elements one obtains eight equations;  using the fact that $y = 1-x$, all these  equations are reduced to 
\[
x(2x-1)(x-1)  = 0, 
\]
which implies that  $H_{par}$ is isomorphic to the $3$-dimensional algebra 
\[
k[x]/\langle x(2x-1)(x-1) \rangle .
\] 
\end{exmp}

The ``partial Hopf algebra'' $H_{par}$ has also a structure of a partial smash product. For each $h \in H$, define the elements  
\[
\varepsilon_h = [h_{(1)}][S(h_{(2)})] , \quad \tilde{\varepsilon}_h =[S(h_{(1)})][h_{(2)}] \quad \in H_{par} .
\]
We have the following identities in $H_{par}$:
\begin{enumerate}
\item[(a)] $\varepsilon_k [h] = [h_{(2)}]\varepsilon_{S^{-1}(h_{(1)})k}$;
\item[(b)] $[h]\varepsilon_k = \varepsilon_{h_{(1)}k}[h_{(2)}]$;
\item[(c)] $\varepsilon_{h_{(1)}} \varepsilon_{h_{(2)}}= \varepsilon_h$;
\item[(d)] $\tilde{\varepsilon}_k [h] = [h_{(1)}] \tilde{\varepsilon}_{kh_{(2)}}$;
\item[(e)] $[h]\tilde{\varepsilon}_k = \tilde{\varepsilon}_{kS^{-1} (h_{(2)})} [h_{(1)}]$;
\item[(f)] $\tilde{\varepsilon}_{h_{(1)}} \tilde{\varepsilon}_{h_{(2)}} =\tilde{\varepsilon}_h$;
\item[(g)] $\tilde{\varepsilon}_h \varepsilon_k = \varepsilon_k \tilde{\varepsilon}_h$. 
\end{enumerate}
Unlike the group case, we don't construct a commutative subalgebra $A$ of $H_{par}$ but we define two subalgebras
\[
A=\langle \varepsilon_h \; | \; h\in H \rangle , \qquad \mbox{and  } \tilde{A}=\langle \tilde{\varepsilon}_h \; | \; h\in H \rangle ,
\]
which are not necessarily commutative, nevertheless, they commute mutually. With this subalgebra, one can prove the following Structure Theorem about $H_{par}$.

\begin{thm}\label{Hparisosmash}
Let $H$ be a Hopf algebra. There exists a partial action of $H$ on the subalgebra $A\subseteq H_{par}$ such that
$H_{par} \cong \underline{A\# H}$. 
\end{thm}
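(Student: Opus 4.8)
The plan is to exhibit an explicit partial action of $H$ on $A$ and then to imitate Dokuchaev's argument for the group case ($k_{par}G\cong A\rtimes_\alpha G$), producing mutually inverse algebra maps between $H_{par}$ and $\underline{A\# H}$. As candidate partial action I would take the ``conjugation'' formula
\[
h\cdot a := [h_{(1)}]\,a\,[S(h_{(2)})],\qquad h\in H,\ a\in A\subseteq H_{par},
\]
which on generators reads $h\cdot\varepsilon_k=\varepsilon_{h_{(1)}k}\,\varepsilon_{h_{(2)}}$. The first point to settle is that $h\cdot a$ again lies in $A$: writing $a$ as a combination of monomials $\varepsilon_{k_1}\cdots\varepsilon_{k_n}$, identity (b) above, $[h]\varepsilon_k=\varepsilon_{h_{(1)}k}[h_{(2)}]$, lets one slide the leading $[h_{(1)}]$ rightward across the monomial, leaving a trailing factor $[h_{(n+1)}][S(h_{(n+2)})]=\varepsilon_{h_{(n+1)}}$ which is absorbed into $A$; here $1_A=\varepsilon_{1_H}=1_{H_{par}}$.

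Next I would verify the partial action axioms (PA1)--(PA3), and in fact (PA3$'$), so the action is symmetric. (PA1) is immediate from $[1_H]=1$. For (PA2) one expands $(h_{(1)}\cdot a)(h_{(2)}\cdot b)=[h_{(1)}]\,a\,\tilde{\varepsilon}_{h_{(2)}}\,b\,[S(h_{(3)})]$, moves $\tilde{\varepsilon}_{h_{(2)}}$ past $b\in A$ using the mutual commutation identity (g), and collapses the resulting $[S(h_{(2)})][h_{(3)}][S(h_{(4)})]$ to $[S(h_{(2)})]$ via $[S(h_{(1)})][h_{(2)}][S(h_{(3)})]=[S(h)]$, which follows from defining relation (5) of Definition~\ref{definitionhpar} and the antipode axiom. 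For (PA3) one expands both sides; on the right one uses the absorption identity $\varepsilon_{h_{(1)}}[h_{(2)}m]=[h][m]$ (a consequence of relations (4)--(5) and the antipode axiom) to bring $(h_{(1)}\cdot 1_A)(h_{(2)}k\cdot a)$ to the form $[h_{(1)}][k_{(1)}]\,a\,[S(h_{(2)}k_{(2)})]=[h_{(1)}][k_{(1)}]\,a\,[S(k_{(2)})S(h_{(2)})]$, and a further application of the splitting relations matches this with the left-hand side $[h_{(1)}][k_{(1)}]\,a\,[S(k_{(2)})][S(h_{(2)})]$. The axiom (PA3$'$) is handled symmetrically, via the mirror identity $[h_{(1)}]\tilde{\varepsilon}_{h_{(2)}}=[h]$ and (g).

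With this partial action in hand I would form $\underline{A\# H}$ and set up two maps. The map $\psi\colon H\to\underline{A\# H}$, $\psi(h)=1_A\# h=\varepsilon_{h_{(1)}}\otimes h_{(2)}$, is a partial representation of $H$ (one of the paradigmatic examples recalled above), so by Theorem~\ref{univHpar} it induces an algebra morphism $\Phi:=\hat{\psi}\colon H_{par}\to\underline{A\# H}$. In the other direction, let $\Psi\colon\underline{A\# H}\to H_{par}$ be the restriction of the linear map $A\otimes H\to H_{par}$, $a\otimes h\mapsto a[h]$. Using the smash product law $(a\# h)(b\# k)=a(h_{(1)}\cdot b)\# h_{(2)}k$, then $\varepsilon_{m_{(1)}}[m_{(2)}]=[m]$, a splitting relation, the commutation (g), and $[h_{(1)}]\tilde{\varepsilon}_{h_{(2)}}=[h]$, one computes $\Psi\big((a\# h)(b\# k)\big)=a[h]\,b[k]$, so $\Psi$ is an algebra morphism. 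Finally $\Phi$ and $\Psi$ are mutually inverse: both composites are algebra endomorphisms, so it suffices to check generators. One has $\Psi\Phi([h])=\Psi(\varepsilon_{h_{(1)}}\otimes h_{(2)})=\varepsilon_{h_{(1)}}[h_{(2)}]=[h]$; and since $\underline{A\# H}$ is generated as an algebra by the $1_A\# h$ together with the $\varepsilon_k\# 1_H=\varepsilon_k\otimes 1_H$, one has $\Phi\Psi(1_A\# h)=\Phi([h])=1_A\# h$ and $\Phi\Psi(\varepsilon_k\otimes 1_H)=\Phi(\varepsilon_k)=\psi(k_{(1)})\psi(S(k_{(2)}))=\varepsilon_{k_{(1)}}\# k_{(2)}S(k_{(3)})=\varepsilon_k\# 1_H$ by the counit axiom. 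Hence $\Phi$ is the desired algebra isomorphism $H_{par}\cong\underline{A\# H}$.

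The main obstacle is the Sweedler-index bookkeeping in the two middle steps, namely verifying (PA3)/(PA3$'$) and the multiplicativity of $\Psi$: although routine, these force one to juggle simultaneously the seven $\varepsilon$--$\tilde{\varepsilon}$ identities (a)--(g), the four splitting relations defining $H_{par}$, and the derived identities $\varepsilon_{h_{(1)}}[h_{(2)}]=[h]=[h_{(1)}]\tilde{\varepsilon}_{h_{(2)}}$ and $[h_{(1)}][S(h_{(2)})][h_{(3)}]=[h]$. By comparison, the closure of $A$ under the action, axioms (PA1)--(PA2), and the final mutual-inverse check are mechanical.
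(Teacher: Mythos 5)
Your proposal is correct and follows essentially the same route as the paper's (sketched) proof: the same conjugation formula $h\cdot a=[h_{(1)}]\,a\,[S(h_{(2)})]$ defining the partial action on $A$, the morphism $\Phi$ obtained from the partial representation $\psi(h)=1_A\#h$ via the universal property of $H_{par}$, and the same inverse $\Psi(a\#h)=a[h]$. You simply fill in the closure, axiom, and multiplicativity verifications that the paper leaves as routine.
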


\begin{proof} (Sketch) Basically, the partial action of $H$ on $A$ is given by
\[
h\cdot (\varepsilon_{k^1} \ldots \varepsilon_{k^n })=[h_{(1)}]\varepsilon_{k^1} \ldots \varepsilon_{k^n }[S(h_{(2)})] =
\varepsilon_{h_{(1)} k^1} \ldots \varepsilon_{h_{(n)}k^n }\varepsilon_{h_{(n+1)}} .
\]
The morphisms $\Phi :H_{par} \rightarrow \underline{A\# H}$ and $\Psi :\underline{A\# H}  \rightarrow H_{par}$ are obtained using the universal property of $H_{par}$ and the universal property of the partial smash product. They are given by the formulas
\begin{eqnarray*}
\Phi ([h^1]\ldots [h^n])&=&\left( \varepsilon_{h^1_{(1)}} \varepsilon_{h^1_{(2)}h^2_{(1)}}\ldots \varepsilon_{h^1_{(n)}\ldots h^n_{(1)}} \right) \# h^1_{(n+1)}\ldots h^n_{(2)};\\
\Psi ((\varepsilon_{h^1} \ldots \varepsilon_{h^n} \# k)&=&\varepsilon_{h^1} \ldots \varepsilon_{h^n}[k] .
\end{eqnarray*}
\end{proof}

With the help of this Structure Theorem, we can construct a surprising example of a finite dimensional Hopf algebra whose ``partial Hopf algebra'' is infinite dimensional, namely, the four dimensional Sweedler Hopf algebra. Let us denote
\[
H_4 =\langle 1,g,x \; | \; g^2 =1, \; x^2 =0, \; gx=-xg \rangle
\]
which is a Hopf algebra with structure given by
\begin{eqnarray*}
& \, & \Delta (g)=g\otimes g \qquad \epsilon (g) =1 , \qquad S(g)=g ;\\
& \, & \Delta (x)=x\otimes 1 +g\otimes x, \qquad \epsilon (x)=0 , \qquad S(x)=-gx .
\end{eqnarray*}
The subalgebra $A\subseteq (H_4)_{par}$ is the infinite dimensional algebra
\[
A=k[x,z]/\langle 2xz-z ,2x^2 -x \rangle 
\]
And then, the algebra $(H_4)_{par}$ can be written as the partial smash product \[
(H_4)_{par} =\underline{\left( k[x,z]/\langle 2xz-z ,2x^2 -x \rangle \right) \# H_4} .
\]
 Roughly speaking, the potential for partiality of a Hopf algebra is encoded in the size of the subalgebra $A\subseteq H_{par}$. 
As we see from the above example for the Sweedler Hopf algebra, even when $H$ is finite dimensional, $H_{par}$ can be infinite dimensional. 
This enabled the construction of an infinity of non-isomorphic finite dimensional partial modules over the Sweedler Hopf algebra \cite{ABV2}. 
 
One of the important features of the ``partial Hopf algebra'' $H_{par}$ is that it has a structure of a Hopf algebroid over the base algebra $A$. This enables us to explore the monoidal structure of the category of partial $H$-modules, which is isomorphic to the category of $H_{par}$-modules. The main results relative to the Hopf algebroid structure of $H_{par}$ and the monoidal strtucture of the category ${}_H \mathcal{M}^{par}$ are summarized in Theorem \ref{5.16}.

\begin{thm} \label{5.16}\cite{ABV} 
\begin{enumerate}
\item Let $H$ be a Hopf algebra with invertible antipode, then $H_{par}$ is a Hopf algebroid over the base algebra $A$.
\item There is a functor $(\underline{\quad})_{par}$, between the category of Hopf algebras with invertible antipode and 
the category of Hopf algebroids, sending $H$ to $H_{par}$.
\item The category of partial $H$-modules (or $H_{par}$-modules), is a monoidal category $({}_{H_{par}} \mathcal{M}, \otimes_A ,A)$ and the forgetful functor $U:{}_{H_{par}} \mathcal{M} \rightarrow {}_A \mathcal{M}_A$ is strictly monoidal.
\item Algebras in the monoidal category ${}_{H_{par}} \mathcal{M}$ coincide with partial $H$-modules algebras.
\end{enumerate}
\end{thm}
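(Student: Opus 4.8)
The plan is to transport the partial-smash-product description of Theorem~\ref{Hparisosmash} and to build all of the Hopf-algebroid data on $H_{par}$ explicitly on generators, then deduce (2)--(4) from (1) together with standard bialgebroid theory and the isomorphism of categories ${}_H\mathcal{M}^{par}\cong{}_{H_{par}}\mathcal{M}$ recorded after Theorem~\ref{univHpar}. For (1) I take as base algebra $A=\langle\varepsilon_h\rangle\subseteq H_{par}$; as source the canonical inclusion $s\colon A\hookrightarrow H_{par}$; and as target an algebra map $t\colon A^{\mathrm{op}}\to H_{par}$ with image in the commuting subalgebra $\tilde A=\langle\tilde\varepsilon_h\rangle$ (equivalently $t=\mathcal{S}\circ s$ once the antipode is in hand), the commutation being exactly identities (a)--(g) above. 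The comultiplication and counit are defined on generators by $\Delta([h])=[h_{(1)}]\otimes_A[h_{(2)}]$ and $\epsilon([h])=\varepsilon_h$, and the antipode by the anti-algebra map $\mathcal{S}$ with $\mathcal{S}([h])=[S(h)]$. The first task is well-definedness: each map must annihilate the ideal defining $H_{par}=T(H)/I$, i.e.\ be compatible with the five relation families of Definition~\ref{definitionhpar}; for $\Delta$ and $\epsilon$ this is a direct Sweedler computation using coassociativity and the antipode axioms of $H$, and for $\mathcal{S}$ the cleanest route is to check that $h\mapsto[S(h)]$ is a partial representation of $H$ in $H_{par}^{\mathrm{op}}$ and then apply the universal property of Theorem~\ref{univHpar}. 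This last verification is the step where invertibility of $S$ genuinely enters, since matching the defining relations after reversing the product requires $S^{-1}$; and $\mathcal{S}^{-1}$ is then $[h]\mapsto[S^{-1}(h)]$, which the definition of a Hopf algebroid requires to exist.

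With the structure maps in place, one checks the left-bialgebroid axioms --- that $\Delta$ corestricts to the Takeuchi product $H_{par}\times_A H_{par}$ and is an algebra homomorphism there, coassociativity, that $\epsilon$ is a left character with $\epsilon\circ s=\mathrm{id}_A$ and the counit identities --- together with the Hopf-algebroid axioms relating $\mathcal{S}$ to the left and right bialgebroid structures over the common base $A$. I expect the main obstacle of the whole theorem to sit here: verifying that $\Delta$ actually lands in $H_{par}\times_A H_{par}$ and is multiplicative forces one to keep precise track of which $A$-action (through $s$ versus through $t$) is being balanced in each relative tensor product, and it is at this point that the commutation identities (a)--(g) for $\varepsilon_h,\tilde\varepsilon_h$ and the partial-representation relations of $[\underline{\;}]$ are used most heavily; the antipode axioms are lengthy but reduce to the same ingredients.

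For (2), a morphism $f\colon H\to H'$ of Hopf algebras with invertible antipodes induces $T(f)\colon T(H)\to T(H')$, and since $f$ commutes with $\Delta$, $\epsilon$ and $S$ it carries the ideal $I_H$ into $I_{H'}$; hence $f$ descends to an algebra map $f_{par}\colon H_{par}\to H'_{par}$ with $f_{par}([h])=[f(h)]$ and $f_{par}(\varepsilon_h)=\varepsilon_{f(h)}$, so $f_{par}$ restricts to the base map $A\to A'$ and intertwines $s,t,\Delta,\epsilon,\mathcal{S}$; it is therefore a morphism of Hopf algebroids, and $(\underline{\quad})_{par}$ preserves identities and composites trivially.

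For (3), part (1) makes $H_{par}$ in particular a left bialgebroid over $A$, so by the standard theorem on bialgebroid modules the category ${}_{H_{par}}\mathcal{M}$ is monoidal with tensor product $\otimes_A$, unit object $A$ acting through $\epsilon$, and forgetful functor to ${}_A\mathcal{M}_A$ strictly monoidal; composing with the isomorphism ${}_H\mathcal{M}^{par}\cong{}_{H_{par}}\mathcal{M}$ gives the statement, a partial module $(X,\pi)$ carrying the $A$-bimodule structure $\varepsilon_h\cdot x=\pi(h_{(1)})\pi(S(h_{(2)}))x$ and $x\cdot\tilde\varepsilon_h=\pi(S(h_{(1)}))\pi(h_{(2)})x$, and $X\otimes_A Y$ the partial action through $\Delta$. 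For (4), again by the general bialgebroid statement the monoids in ${}_{H_{par}}\mathcal{M}$ are exactly the $A$-rings $R$ whose unit $\eta\colon A\to R$ and multiplication $m\colon R\otimes_A R\to R$ are $H_{par}$-linear; restricting the $H_{par}$-action along $[\underline{\;}]\colon H\to H_{par}$ one then identifies these with partial $H$-module algebras: $H_{par}$-linearity of $m$ applied through $\Delta([h])=[h_{(1)}]\otimes_A[h_{(2)}]$ gives (PA2), $H_{par}$-linearity of $\eta$ together with $\pi(1_H)=\mathrm{id}$ and unitality gives (PA1), and (PA3) comes out of combining these with the absorption identity $\varepsilon_{h_{(1)}}[h_{(2)}]=[h]$ in $H_{par}$ and the partial-representation relations for $\pi$; conversely a partial $H$-module algebra $R$ acquires an $H_{par}$-module structure from Theorem~\ref{univHpar}, and $\varepsilon_h\mapsto h\cdot 1_R$ is checked to be a well-defined algebra map $A\to R$ making $R$ an $A$-ring with $H_{par}$-linear structure maps. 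The mildly delicate point in (4) is precisely the derivation of (PA3) and, dually, the verification that $\varepsilon_h\mapsto h\cdot 1_R$ respects products, which are the only genuinely non-routine checks in that part.
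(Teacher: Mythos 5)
The survey states this theorem without proof, deferring entirely to \cite{ABV}; your outline reconstructs precisely the construction used there --- base algebra $A$, target map landing in $\tilde{A}$ via the antipode, $\Delta([h])=[h_{(1)}]\otimes_A[h_{(2)}]$, $\epsilon([h])=\varepsilon_h$, $\mathcal{S}([h])=[S(h)]$ obtained from the universal property of Theorem \ref{univHpar}, with $S^{-1}$ entering exactly where you say --- so it is essentially the same approach. The only caveat is that yours is an outline: the Takeuchi-product, counit and antipode axiom verifications you defer are where the bulk of the work in \cite{ABV} actually lies.
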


\begin{exmp} Consider a field of characteristic different from 2 and the Hopf algebra $H=(kC_2)^*$. As we have seen in Example \ref{kc2}, the Hopf algebroid $H_{par}$ is the three dimensional algebra $k[x]/\langle x(x-1)(2x-1)\rangle$. Then the partial $H$-modules, which can be interpreted as partially $\mathbb{Z}_2$-graded vector spaces are of the form
\[
X=X_0 \oplus X_1 \oplus X_{\frac{1}{2}} .
\]
The algebra objects in the category of partial $H$-modules are the partially $\mathbb{Z}_2$-graded algebras which are described in Theorem \ref{z2grad}. 
\end{exmp}

The theory of Hopf algebroids may provide a fresh and wider environment to treat partial representations and partial modules. The monoidal structure of the category of partial modules gives rise to many new questions form the point of view of representation theory. 
 
\section{Conclusions and perspectives}
\begin{flushleft}
{\it For we know in part... \\ 
but when that which is perfect is come,\\
then that which is in part shall be done away.\\
 I Corinthians 13:9-10}\\
\end{flushleft}

So far, the history of partial actions spans two decades since their first appearance in the context of operator algebras.
We conclude this survey opening up some possible new research lines for the years to come.\\

(1) The existing theory mostly deals with discrete groups.
 In order to take into account the topological or differentiable structure of the group some extra assumptions have to be made in order to make the domains vary coherently with the elements of the group. The simplest case of partial actions coming from incomplete flows of vector fields on a differentiable manifold \cite{AbadieTwo} remains to be fully understood. 
As we mentioned before, the globalization of these partial actions give rise to non-Hausdorff manifolds, and the techniques of noncommutative differential geometry may be needed to treat these globalized spaces. A concrete problem with physical interest is the resolution of cosmic singularities. The famous Hawking-Penrose theorem states that very generic conditions of causality in the space time and with positive energy conditions for gravity will force the space time to have incomplete causal geodesics \cite{Haw}. Then the globalization of this partial action of the additive group of real numbers, which would be the candidate of ``space time'' with a complete set of causal geodesiscs, would be a non-Hausdorrf space, perhaps better described in terms of noncommutative geometry. 

(2) Partial representations of finite groups have been investigated intensively \cite{dok0,dok1}. The machinery to find irreducible partial representations of a finite group $G$ uses explicitly the structure of the groupoid $\Gamma (G)$ described in Section \ref{5}. For an infinite discrete group, it is also possible to characterize the partial group algebra $k_{par}G$ as an \'etale  groupoid of germs as described in \cite{BEM}. But for a general topological or Lie group, the description of the partial group algebra as a groupoid algebra has not been achieved. Even the classification of finite dimensional partial representations of a compact Lie group needs to be done. An interesting object to be analysed is the Hopf algebroid of representative functions of $k_{par}G $, for $G$ a compact Lie group. Furthermore, the theory of Lie groupoids \cite{Mack} could provide a useful language to describe partial actions of Lie groups over smooth manifolds.

(3) One can construct a partial representation of a given Hopf algebra $H$ out of a classical $H$-module with a projection on it satisfying certain commutation relations \cite{ABV2}. The notion of a globalization can be extended to partial representations. This is what we call a dilation of a partial representation and it leads to a functor from the category of partial $H$-modules to the category of $H$-modules with projections \cite{ABV2}. The relationship between the subcategory of $H$-modules inside the category of partial $H$-modules need to be better clarified. The role of the monoidal structure over $A\subseteq H_{par}$ in the category of partial 
$H$-modules and the monoidal structure over the base field $k$ in the category of $H$-modules remains elusive. 

(4) The theory of partial corepresentations and partial comodules of Hopf algebras has been developed in \cite{ABCQV}. For a Hopf algebra $H$, a universal coalgebra $H^{par}$is constructing, factorizing any partial corepresentation by a morphism of coalgebras in a way dual to how the partial ``Hopf algebra'' $H_{par}$ factorizes partial representations by morphisms of algebras. This universal coalgebra has moreover a Hopf coalgebroid structure that might shed some light on the monoidal structure of the category of partial $H$-comodules and it allows one to construct new categories, as the category of partial Yetter-Drinfeld modules.

(5) The categorification of partial actions is another direction to be explored. The first approach appeared in \cite{AAB}, where a partial action of a Hopf algebra on a $k$-linear category was considered. More recently, a new approach, of Hopf categories, emerged in \cite{BCV}. Basically, a Hopf category is a multi object analog of a Hopf algebra. In particular, groupoids provide examples of Hopf categories enabling us to describe partial actions of groups on sets by Hopf categories. Hopf categories may become a useful technique for the study of partial actions.

(6) There is much to be done to fully understand partial actions from a purely categorical point of view. When we study the partial representations of a Hopf algebra $H$, we end up with the Hopf algebroid $H_{par}$. Both $H$ and $H_{par}$ induce a Hopf monad on an appropriate monoidal category. The transition between partial $H$-modules and $H$-modules suggests that there is a close relation between these two different Hopf monads. Such a monadic approach to partial actions could lead to a better understanding of the role of partiality in representation theory.

\end{document}